\newcounter{derpp}
\newtheorem{thm}{Theorem}[section]
\newtheorem{cor}[thm]{Corollary}
\newtheorem{prop}[thm]{Proposition}
\newtheorem{lem}[thm]{Lemma}
\newtheorem{conj}[thm]{Conjecture}
\newtheorem{mthm}[derpp]{Theorem}
\newtheorem{mcor}[derpp]{Corollary}
\newtheorem{defn}[thm]{Definition}
\DeclareMathOperator{\Spec}{Spec}
\DeclareMathOperator{\Proj}{Proj}
\DeclareMathOperator{\Tor}{Tor}
\DeclareMathOperator{\gr}{gr}
\DeclareMathOperator{\Supp}{Supp}
\DeclareMathOperator{\red}{red}
\newcommand{\mco}{\mathcal{O}}
\newcommand{\fm}{\mathfrak{m}}
\newcommand{\fn}{\mathfrak{n}}
\newcommand{\fp}{\mathfrak{p}}
\newcommand{\fq}{\mathfrak{q}}
\newcommand{\bg}{\mathbf{g}}
\newcommand{\ds}{\displaystyle}
\newcommand{\oth}{\widehat{\otimes}}
\newcommand{\opi}{\overline{\pi}}
\newcommand{\wTor}{\widehat{\Tor}}
\let\c@equation\c@thm
\def\anchorlessfootnote{\gdef\@thefnmark{}\@footnotetext}
\numberwithin{equation}{section}
\title{Positivity of Intersection Multiplicity for Power Series over a Two-Dimensional Base}
\author{C. Skalit}
\address{Department of Mathematics \\
University of Illinois at Chicago \\
851 S. Morgan St. \\
Chicago, IL 60607 \\
cskalit@gmail.com
}
\begin{document}
\title{Positivity of Intersection Multiplicity Over a Two-Dimensional Base}
\begin{abstract}We prove the positivity of Serre's Intersection Multiplicity for regular local rings that are essentially smooth over a two-dimensional, regular base. Afterward, we apply this result to prove a transversality theorem for unramified regular local rings via a local analysis on the blowup.
\end{abstract}
\maketitle
\section{Introduction}\anchorlessfootnote{\textit{2010 Mathematics Subject Classification.} Primary: 13D22. Secondary: 14C17. \\ \indent Keywords: Intersection multiplicity, Power-series, Blowup, Tangent cone.}
Given a regular local ring $A$ and two finitely-generated $A$-modules $M$ and $N$ whose supports intersect in a single point, Serre \cite{Serre} defines the intersection multiplicity of $M$ and $N$ via the (necessarily finite) sum
\[\chi^A(M,N) := \sum_{i \geq 0}{(-1)^i \ell(\Tor_i^A(M,N))}. \]
This formula, which prescribes how to multiply properly intersecting algebraic cycles, has served as the focal point for one of the long-standing conjectures in the homological theory of commutative rings:
\begin{conj}[Serre's Conjecture]\label{scc} Let $A$ be a regular local ring, and suppose that $M$ and $N$ are finitely-generated $A$ modules such that $\ell(M \otimes_A N) < \infty$. Then
\begin{itemize}
\item[(a)] $\dim M + \dim N \leq \dim A$. \textit{(Decency of intersection)}
\item[(b)] $\chi^A(M,N) \geq 0$. \textit{(Non-negativity)}
\item[(c)] $\chi^A(M,N) = 0$ if $\dim M + \dim N < \dim A$. (Vanishing)
\item[(d)] $\chi^A(M,N) > 0$ if $\dim M + \dim N = \dim A$. (Positivity)
\end{itemize}
\end{conj}
While part (a) was settled by Serre [ibid] using only the Cohen Structure Theorem, the proofs of the other parts of the conjecture in full generality have borrowed techniques from far outside the realm of commutative algebra. Part (b), which is due to Gabber \cite{Hochster, Berthelot}, uses de Jong's method of producing a generically-finite resolution of singularities \cite{deJong} to reduce the non-negativity of $\chi^A$ to showing that a certain vector-bundle is globally-generated. The vanishing in part (c) has been deduced by Gillet-Soul\'{e} \cite{Gillet2,Gillet} via K-theoretic techniques while a completely different proof by Paul Roberts \cite{Roberts} uses his machinery of local Chern characters.

\subsection*{The Main Theorem}
The general case of (d), which we shall henceforth refer to as Serre's Positivity Conjecture, still remains open. While some low-dimensional cases have been resolved, the most general result in this direction is Serre's \cite{Serre}, which states that if (the completion of) $A$ is a power series ring over a field or complete discrete valuation ring, then positivity holds. The central aim of this paper is to extend this result to power series over a two-dimensional base:

\begin{mthm}Let $(R,\fm)$ be a complete, regular local ring of dimension two, and let $A = R[[X_1, \cdots, X_m]]$. If $M$ and $N$ are finitely-generated $A$-modules satisfying $\dim M + \dim N = \dim A$ and $\ell(M \otimes_A N) < \infty$, then $\chi^A(M,N) > 0$.
\end{mthm}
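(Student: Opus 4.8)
The plan is to combine Serre's method of \emph{reduction to the diagonal} --- carried out over the two--dimensional base $R$ rather than over a field or a discrete valuation ring --- with a dévissage that isolates and then controls the defect produced by the nonvanishing of $\Tor^R_{>0}(M,N)$.

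First I would make the usual reductions. Since $\chi^A(-,-)$ is biadditive on short exact sequences and, by the (known) Vanishing part (c), is zero whenever $\dim M+\dim N<\dim A$, it depends only on the top--dimensional cycles of $M$ and $N$; hence I may assume $M=A/\fp$ and $N=A/\fq$ with $\fp,\fq$ prime, $\dim A/\fp+\dim A/\fq=\dim A$, and $\sqrt{\fp+\fq}=\fm_A$ (so $M\otimes_A N$ has finite length and is nonzero). It is convenient to record the trichotomy for a cyclic $A/\fp$ according to $\fp\cap R$: \emph{vertical} ($\fm_R\subseteq\fp$, so $A/\fp$ is a module over $A/\fm_R A=k[[\underline X]]$, a power series ring over a field), \emph{horizontal of codimension one} ($\fp\cap R=(g)$ a height--one prime, so $A/\fp$ is a module over $A/gA=(R/gR)[[\underline X]]$), and \emph{generically finite over $R$} ($\fp\cap R=0$, in which case every nonzero element of $R$ is a nonzerodivisor on $A/\fp$).

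Now the core step. Put $B:=A\oth_R A=R[[\underline X,\underline Y]]$; this is regular of dimension $2m+2$, and the diagonal $B\twoheadrightarrow A$ (sending $Y_i\mapsto X_i$) is cut out by the regular sequence $\underline Z:=(X_1-Y_1,\dots,X_m-Y_m)$, so $K_\bullet(\underline Z)$ is a finite free resolution of $A$ over $B$. Because $R\to A$ is flat, the identity $M\otimes_A^{\mathbf L}N\simeq(M\oth_R^{\mathbf L}N)\otimes_B^{\mathbf L}A$ in $D(A)$ yields a spectral sequence $E^2_{pq}=\Tor_p^B\!\bigl(A,\Tor_q^R(M,N)\bigr)\Rightarrow\Tor_{p+q}^A(M,N)$; since $\Supp_B\Tor_q^R(M,N)\subseteq\Supp M\times_{\Spec R}\Supp N$ meets the diagonal only in the finite set $\Supp(M\otimes_A N)$, all modules in sight have finite length, and since $R$ has global dimension two the index $q$ ranges over $\{0,1,2\}$. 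Taking Euler characteristics gives the \emph{defect formula}
\[
\chi^A(M,N)\;=\;\chi^B\!\bigl(A,\,M\oth_R N\bigr)\;-\;\chi^B\!\bigl(A,\Tor_1^R(M,N)\bigr)\;+\;\chi^B\!\bigl(A,\Tor_2^R(M,N)\bigr),
\]
where each term is a Koszul Euler characteristic $\chi^B(A,P)=\sum_i(-1)^i\ell\bigl(H_i(K_\bullet(\underline Z;P))\bigr)$, which by Serre's theory of the Koszul complex equals the Samuel multiplicity $e(\underline Z;P)$ when $\dim_B P=m$ and is $0$ when $\dim_B P<m$; in particular every term is $\ge 0$. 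The leading term is even \emph{strictly} positive: on the one hand $\dim_B(M\oth_R N)\le m$ because modding out by the $m$ elements $\underline Z$ returns the finite--length $M\otimes_A N$; on the other hand, inside the regular ring $B$ the set $\Supp M\times_{\Spec R}\Supp N$ is the intersection of $\operatorname{pr}_1^{-1}(\Supp M)$ and $\operatorname{pr}_2^{-1}(\Supp N)$, whose codimensions sum to $(m+2-\dim M)+(m+2-\dim N)=m+2$, so the intersection has codimension $\le m+2$, i.e.\ dimension $\ge m$. Hence $\dim_B(M\oth_R N)=m$ and $\chi^B(A,M\oth_R N)=e(\underline Z;M\oth_R N)>0$.

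The remaining --- and genuinely hardest --- task is to keep the two defect terms $\chi^B(A,\Tor_q^R(M,N))$, $q=1,2$, from cancelling this leading contribution. My plan is to induct on $\dim R$, the cases $\dim R\le 1$ being exactly Serre's theorems for power series over a field and over a complete discrete valuation ring. Two observations drive the step. First, if $M$ (or $N$) is generically finite over $R$, then $\Tor_q^R(M,N)_\fp=0$ for every prime $\fp$ of height $\le 1$, so $\Tor_1^R(M,N)$ and $\Tor_2^R(M,N)$ are annihilated by a power of $\fm_R B$; replaying reduction to the diagonal with $A/\fm_R A=k[[\underline X]]$ in place of $A$ lands these contributions in Serre's field case, and the finiteness of $\ell(M\otimes_A N)$ forces a dimension count showing they cannot exceed the leading term. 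Second, a general $A/\fp$ sits in $0\to T\to A/\fp\to (A/\fp)/T\to 0$ with $T$ the $R$--torsion submodule, annihilated by some nonzero $g\in R$ and hence a module over $A/gA=(R/gR)[[\underline X]]$; after splitting off its vertical part, the remainder is treated by normalizing the one--dimensional complete local domain $R/gR$ to a complete discrete valuation ring $V$ and transporting the multiplicity along the finite map $(R/gR)[[\underline X]]\to V[[\underline X]]$, whereupon Serre's discrete--valuation case applies. The main obstacle is precisely this last point: the normalization $R/gR\to\widetilde{R/gR}$ is finite but \emph{not} flat, so comparing $\chi$ on the two sides --- and tracking the signs in the defect formula through this comparison --- requires a careful projection--formula / conductor--square analysis, and the whole argument only succeeds once one checks that the strictly positive term $e(\underline Z;M\oth_R N)$ dominates the total contribution of $\Tor_1^R$ and $\Tor_2^R$ in every case of the trichotomy.
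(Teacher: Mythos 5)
You correctly carry out the first half of the argument: the reduction to prime cyclic modules, the identification $B=A\oth_R A$ with the diagonal cut out by a length-$m$ regular sequence, the defect formula
\[
\chi^A(M,N)\;=\;e_\Delta(M\oth_R N)\;-\;e_\Delta(\wTor_1^R(M,N))\;+\;e_\Delta(\wTor_2^R(M,N)),
\]
and the observation that the leading term is strictly positive because $\dim(M\oth_R N)=m$. (A small caveat: you write ordinary $\Tor^R$ where you need completed Tor, and you need $\wTor_2^R$ rather than a priori all $q\ge 0$; the paper establishes that $\wTor_2^R$ actually vanishes whenever one of the modules is not supported on the special fibre, via a depth argument, and this vanishing is itself used.) Up to here the plan coincides with the paper's.

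The second half is where the proposal genuinely breaks down. Your ``Observation 1'' asserts that if $M$ is generically finite over $R$ (i.e.\ $\fp\cap R=0$) then $\wTor_1^R(M,N)$ is annihilated by a power of $\fm_R$, but this does not follow. A height-one localization $R_{\fr}$ is a DVR, over which $\Tor_1$ certainly need not vanish; the only thing one gets for free at height $\le 1$ is the vanishing of $\Tor_2$. What the paper actually proves (Lemma 2.4) is much weaker and more subtle: \emph{only at minimal primes of $\Supp(M\oth_R N)$ which contract nontrivially to $R$} does $\wTor_1^R$ localize to zero, and the proof uses the prior vanishing of $\wTor_2^R$ in a delicate dévissage. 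Your ``Observation 2'' fails for a more elementary reason: since you have reduced to $M=A/\fp$ a domain, the $R$-torsion submodule $T$ is either $0$ or all of $M$, so the filtration $0\to T\to A/\fp\to(A/\fp)/T\to 0$ is trivial and there is nothing to split off.

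More fundamentally, your plan contains no mechanism for ensuring the term $e_\Delta(\wTor_1^R(M,N))$ cannot overwhelm the leading term, and you explicitly flag this as an open obstacle. This is precisely where the paper invokes a result that does not appear in your sketch at all: \emph{Gabber's non-negativity theorem}. After recasting $\wTor^R_i(M,N)$ as $\Tor_i^C((M\oth_R B),(A\oth_R N))$ over the regular ring $C=A\oth_R B$ (Lemma 1.3), one localizes at each $m$-dimensional minimal prime $\fp$ of $\Supp(M\oth_R N)$. Gabber's theorem applied to the regular local ring $C_\fp$ gives
\[
\ell\bigl((M\oth_R N)_\fp\bigr)-\ell\bigl(\wTor_1^R(M,N)_\fp\bigr)\;\ge\;0,
\]
so by the associativity formula for multiplicity the total $\chi^A$ is a non-negative sum. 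Positivity then follows by exhibiting a single $m$-dimensional minimal prime $\fq$ supporting $M\oth_R N$ but not $\wTor_1^R(M,N)$; the paper produces it by a dimension count on the special fibre (using normality and a flatness criterion over a two-dimensional base), combined with the localization lemma mentioned above. Without Gabber's theorem and this prime-by-prime analysis, the induction on $\dim R$ that you propose has no way to control the sign of the defect, and I do not see how to complete it along the lines you suggest.
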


For $R$ any complete, regular local ring and $A = R[[X_1, \cdots, X_m]]$, there is a reduction to the diagonal spectral sequence
\[ E^2_{pq} = \Tor_p^{A \oth_R A}((A \oth_R A)/\Delta, \wTor_q^R(M,N)) \Rightarrow \Tor_{p+q}^A(M,N) \]
where $\Delta$ is the diagonal ideal given by the kernel of the multiplication map $A\oth_R A \twoheadrightarrow A$.
When Serre proves the positivity for the case of $R$ a DVR, he shows that one can assume that $M$ is $R$-flat, meaning that this spectral sequence degenerates to give isomorphisms
\[ \Tor_p^{A \oth_R A}((A \oth_R A)/\Delta, M \oth_R N) \cong \Tor_p^A(M,N), \]
thus permitting us to realize $\chi^A$ as $e_{\Delta}(M \oth_R N)$, the Hilbert-Samuel multiplicity of $M \oth_R N$ with respect to $\Delta$. 

In our situation, when $R$ is two-dimensional, we can no longer be assured that the spectral sequence degenerates; and as a result, $\chi^A(M,N)$ sees contributions from the Hilbert-Samuel multiplicities of each of the higher $\wTor^R_i(M,N)$ modules:
\[ \chi^A(M,N) = \sum_{i=0}^2{(-1)^i e_{\Delta}(\wTor_i^R(M,N))} \]
(where, by convention, $e_{\Delta}(\wTor_i^R(M,N)) = 0$ if $\dim(\wTor_i^R(M,N)) < n$). Reduction to the case where $\wTor_2^R(M,N) = 0$ is fairly routine (cf. Lemma \ref{depth_detection2} below); thus, the chief obstruction to the positivity of $\chi^A(M,N)$ is precisely the $\wTor^R_1(M,N)$ term. Much of the proof of Theorem A is devoted to showing that this module is sufficiently ``small'' in the appropriate sense. Somewhat surprisingly, Gabber's non-negativity theorem plays a crucial role in achieving this end (Corollary \ref{gabber_cor}). After recalling some generalities concerning completed $\Tor$ in Section \ref{generalities}, we prove Theorem A over the course of Sections \ref{ct2} and \ref{proofA}.

As an immediate application of Theorem A, we prove that Serre's Positivity Conjecture holds for the local rings of a smooth scheme over a two dimensional base:
\begin{mcor}Let $f:X \to Y$ be a smooth morphism of schemes with $Y$ regular of dimension at most $2$. Then every local ring $\mco_{X,x}$ satisfies Serre's Positivity Conjecture.
\end{mcor}

The proof proceeds by reducing to the case of a power series ring. However, given an essentially smooth map of local rings $\mco_{Y,f(x)} \to \mco_{X,x}$, one cannot immediately conclude that $\widehat{\mco}_{X,x}$ is a power series ring if the residue field extension $k(x)/k(f(x))$ is inseparable. Some care is required to modify $\mco_{Y,f(x)}$ and $\mco_{X,x}$ in such a way that introduces separability without losing too much information. We address this technique in Section \ref{smooth_cor}.

\subsection*{Detecting Transversality}
Our second application is best phrased in geometric language. Let $X = \Spec A$ where $A$ is a regular local ring, and suppose that $Y$ and $Z$ are closed, integral subschemes that meet in a single point and whose dimensions add up to $\dim X$. When $A$ contains a field, Serre's proof of positivity actually shows that $\chi^{\mco_{X}}(\mco_Y,\mco_Z) \geq e(\mco_Y)e(\mco_Z)$. (Here, $e(\mco_Y)$ and $e(\mco_Z)$ are just the Hilbert-Samuel multiplicities of $\mco_Y$ and $\mco_Z$ with respect to the maximal ideal.) In \cite{Skalit} we show that this bound also holds when $A$ is of mixed-characteristic and unramified. Since $e(\mco_Y) = 1$ precisely when $Y$ is regular (and likewise for $Z$), the inequality $\chi^{\mco_X}(\mco_Y,\mco_Z) \geq e(\mco_Y)e(\mco_Z)$ is saying that intersection multiplicity is sensitive to the singularities on $Y$ and $Z$.

With this lower bound established (at least in the unramified case), it is natural to ask under what circumstances the intersection multiplicity is minimized. If we carry over the intuition from B\'{e}zout-type theorems in $\mathbb{P}_{\mathbb{C}}^n$, we would expect $\chi^{\mco_X}(\mco_Y,\mco_Z) = e(\mco_Y)e(\mco_Z)$ precisely when $Y$ and $Z$ meet ``transversely'' --- that is, when their tangent cones intersect trivially.\footnote{We are \textbf{not} assuming that $Y$, $Z$, or $Y \cap Z$ are non-singular; for us, ``transverse'' is solely a condition on the dimension of the intersection of the tangent cones.}  Algebraically, this amounts to saying that $\gr \mco_Y \otimes_{\gr \mco_X} \gr \mco_Z$ is zero-dimensional. One direction of this implication, that a transverse intersection implies the equality, is a result of Tennison \cite{Tennison} and holds for any regular local ring $A$. In \cite{Skalit} we put forth the following conjecture which asserts that $\chi^{\mco_X}$ alone is enough to detect transversality:

\begin{conj}[Transversality Conjecture]\label{transversal} Let $A$ be a regular local ring, and let $X = \Spec A$ with $Y$ and $Z$ be as above. Then $\chi^{\mco_X}(\mco_Y,\mco_Z) \geq e(\mco_Y)e(\mco_Z)$ with equality if and only if the tangent cones of $Y$ and $Z$ intersect trivially.
\end{conj}

As in \cite{Skalit}, we shall restrict our attention to the case were $A$ is unramified (and hence the inequality $\chi^{\mco_X}(\mco_Y,\mco_Z) \geq e(\mco_Y)e(\mco_Z)$ is already known to hold). Our main technique for exploring this conjecture is to construct the blowup $\phi:\widetilde{X} \to X$ of $X$ along its closed point and then consider the strict transforms $\widetilde{Y}$ and $\widetilde{Z}$ on $\widetilde{X}$. Set-theoretically, $\widetilde{Y} \cap \widetilde{Z}$ agrees with the intersection of the projectivized tangent cones of $Y$ and $Z$, which can be measured quantitatively via the formula \cite[20.4.3]{Fulton}:
\[ \chi^{\mco_{X}}(\mco_Y,\mco_Z) = e(\mco_Y)e(\mco_Z) + \chi^{\mco_{\widetilde{X}}}(\mco_{\widetilde{Y}},\mco_{\widetilde{Z}}) \]
whose ``error term'' is given by
\[ \chi^{\mco_{\widetilde{X}}}(\mco_{\widetilde{Y}},\mco_{\widetilde{Z}}) := \sum_{i,j\geq 0}{(-1)^{i+j}\ell(H^i(\widetilde{X},\Tor_j^{\mco_{\widetilde{X}}}(\mco_{\widetilde{Y}},\mco_{\widetilde{Z}})))}. \]
Proving our conjecture therefore amounts to showing that $\chi^{\mco_{\widetilde{X}}}(\mco_{\widetilde{Y}},\mco_{\widetilde{Z}}) > 0$ precisely when $\widetilde{Y} \cap \widetilde{Z} \neq \emptyset$. While blowing up simplifies our problem conceptually, it introduces the algebraic obstacle of ramification. If $(A,\fn)$ is an unramified regular local ring with $p \in \fn-\fn^2$, then $\widetilde{X}$ the blowup along $\fn$ will be ramified along a certain two-dimensional subscheme (see Corollary \ref{ramification_locus}). Nonetheless, as we show in Proposition \ref{struct_blowup}, each of these local rings completes to a power series over a two-dimensional base, thereby allowing us to invoke Theorem A and conclude:

\begin{mcor}Let $(A,\fn)$ be an unramified regular local ring. Put $X = \Spec A$ and let $\widetilde{X}$ be the blowup along $\fn$. Then Serre's Positivity Conjecture holds at every local ring $\mco_{\widetilde{X},x}$.
\end{mcor}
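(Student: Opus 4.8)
The plan is to deduce this directly from Theorem~A by passing to completions. I would fix a point $x \in \widetilde{X}$, write $B = \mco_{\widetilde{X},x}$ with maximal ideal $\mathfrak{m}_x$, and first observe that $B$ is regular --- either because blowing up a regularly embedded subscheme (here, the closed point) of a regular scheme again yields a regular scheme, or, more to the point, because Proposition~\ref{struct_blowup} exhibits $\widehat{B}$ as a power series ring over a regular base. Since parts (a), (b), and (c) of Serre's Conjecture hold unconditionally, the only thing to prove at $B$ is positivity: for finitely generated $B$-modules $M,N$ with $\ell_B(M \otimes_B N) < \infty$ and $\dim M + \dim N = \dim B$, one needs $\chi^B(M,N) > 0$.

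The first reduction is to $\widehat{B}$. Because $\Supp_B \Tor_i^B(M,N) \subseteq \Supp M \cap \Supp N = \{\mathfrak{m}_x\}$, every module $\Tor_i^B(M,N)$ has finite length; combined with the flat base change isomorphisms $\Tor_i^{\widehat{B}}(\widehat{M},\widehat{N}) \cong \Tor_i^B(M,N)$ and the identity $\dim_{\widehat{B}}\widehat{M} = \dim_B M$, this gives $\chi^B(M,N) = \chi^{\widehat{B}}(\widehat{M},\widehat{N})$ while preserving the dimension equality and the finiteness of $\ell(\widehat{M} \otimes_{\widehat{B}} \widehat{N})$. So it suffices to establish positivity over $\widehat{B}$. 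Now Proposition~\ref{struct_blowup} writes $\widehat{B} \cong R[[X_1,\dots,X_m]]$ with $R$ complete regular local of dimension at most $2$. If $\dim R \le 1$, then $R$ is a field or a complete discrete valuation ring, and positivity is Serre's original theorem; if $\dim R = 2$, positivity is precisely Theorem~A. In all cases $\chi^{\widehat{B}}(\widehat{M},\widehat{N}) > 0$, and the previous paragraph concludes. (The degenerate cases $\dim A \le 1$, where the blowup along $\fn$ is empty or equals $X$, are classical and can be mentioned separately.)

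I expect the only real work to be hidden in Proposition~\ref{struct_blowup}: one must verify that on every chart of the blowup and at every point, the ramification that the blowup introduces (Corollary~\ref{ramification_locus}) can be absorbed into a \emph{regular} base of dimension at most two, rather than forcing a higher-dimensional base or breaking the power-series structure outright. Granting that input, the corollary itself is a formal assembly; the only care needed in the write-up is to split cleanly on $\dim R$ and to remember that $\dim R \le 1$ routes us back to Serre while $\dim R = 2$ routes us to Theorem~A.
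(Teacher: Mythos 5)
Your proof is correct and matches the paper's argument: both appeal to Proposition~\ref{struct_blowup} to identify $\widehat{\mco}_{\widetilde{X},x}$ as a power series ring over a complete regular base of dimension at most two and then invoke Serre (low-dimensional base) or Theorem~A (two-dimensional base). You make explicit the reduction from $\mco_{\widetilde{X},x}$ to its completion via flat base change, which the paper leaves implicit, but this is a standard step and the overall route is the same.
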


By our previous work in \cite{Skalit}, if $A$ is an unramified regular local ring, Conjecture \ref{transversal} is known to hold in many circumstances (see Theorem \ref{old} below). When $\widetilde{Y}$ and $\widetilde{Z}$ meet properly (i.e. as a finite --- possibly empty --- set of points), we can calculate $\chi^{\mco_{\widetilde{X}}}(\mco_{\widetilde{Y}},\mco_{\widetilde{Z}})$ in terms of intersection multiplicities on the stalks of $\mco_{\widetilde{X}}$. Since Corollary \ref{pos_blowup} assures us that each of these multiplicities will be positive, we now have

\begin{mcor}Let $(A,\fn)$ be an unramified regular local ring. Let $X = \Spec A$ and let $Y$ and $Z$ be closed, integral subschemes such that $\ell(\mco_Y \otimes_{\mco_X} \mco_Z) < \infty$ and $\dim Y + \dim Z = \dim X$. Suppose that $\chi^{\mco_X}(\mco_Y,\mco_Z) = e(\mco_Y)e(\mco_Z)$. If $\dim(\gr \mco_Y \otimes_{\gr \mco_X} \gr \mco_Z) \leq 1$, then, in fact, $\dim(\gr \mco_Y \otimes_{\gr \mco_X} \gr \mco_Z) = 0$.
\end{mcor}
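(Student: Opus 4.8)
The plan is to argue by contradiction, passing to the blowup $\phi : \widetilde{X} \to X$ of $X$ along $\fn$ (equivalently, along the closed point) and combining the decomposition formula \cite[20.4.3]{Fulton} quoted above with Corollary \ref{pos_blowup}. Write $n = \dim A$. First I would dispose of the case $\dim Y = 0$ or $\dim Z = 0$: then one of $\mco_Y$, $\mco_Z$ is the residue field, $\gr \mco_Y \otimes_{\gr \mco_X} \gr \mco_Z$ is concentrated in degree zero and hence zero-dimensional, and there is nothing to prove. So assume $\dim Y, \dim Z \geq 1$. Since $\gr \mco_Y \otimes_{\gr \mco_X} \gr \mco_Z$ is nonzero in degree zero, its dimension is $\geq 0$, so to prove the claim it suffices to suppose, toward a contradiction, that $\dim(\gr \mco_Y \otimes_{\gr \mco_X} \gr \mco_Z) = 1$.

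Because $A$ is regular, $\gr \mco_X = \gr_{\fn} A \cong k[X_1, \dots, X_n]$, so the exceptional divisor is $E = \Proj \gr_{\fn} A = \mathbb{P}^{n-1}_k$, an effective Cartier divisor in the integral scheme $\widetilde{X}$; moreover $\widetilde{Y} \cap E$ and $\widetilde{Z} \cap E$ are Cartier divisors in the integral schemes $\widetilde{Y}$, $\widetilde{Z}$ with supports the projectivized tangent cones $\Proj \gr \mco_Y$ and $\Proj \gr \mco_Z$. Hence $\widetilde{Y} \cap \widetilde{Z}$ — which lies entirely over the closed point of $X$, hence inside $E$ — has support $\Proj(\gr \mco_Y \otimes_{\gr \mco_X} \gr \mco_Z)$, which under our hypothesis is a non-empty, zero-dimensional, hence finite, set of closed points of $E$. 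Thus $\widetilde{Y}$ and $\widetilde{Z}$ meet properly, and the formula of \cite{Fulton}, together with the hypothesis $\chi^{\mco_X}(\mco_Y,\mco_Z) = e(\mco_Y)e(\mco_Z)$, forces $\chi^{\mco_{\widetilde{X}}}(\mco_{\widetilde{Y}}, \mco_{\widetilde{Z}}) = 0$.

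Next I would localize this vanishing. Since $\widetilde{Y}$ and $\widetilde{Z}$ meet properly, each coherent sheaf $\Tor_j^{\mco_{\widetilde{X}}}(\mco_{\widetilde{Y}}, \mco_{\widetilde{Z}})$ is supported on the finite set $\widetilde{Y} \cap \widetilde{Z}$: it is a direct sum of skyscrapers, so it has no higher cohomology and $H^0$ is the sum of its finite-length stalks, and it vanishes for $j \gg 0$ as $\widetilde{X}$ is regular of finite dimension. Therefore
\[ \chi^{\mco_{\widetilde{X}}}(\mco_{\widetilde{Y}}, \mco_{\widetilde{Z}}) \;=\; \sum_{x \in \widetilde{Y} \cap \widetilde{Z}} \chi^{\mco_{\widetilde{X},x}}(\mco_{\widetilde{Y},x}, \mco_{\widetilde{Z},x}). \]
At each such $x$, the ring $\mco_{\widetilde{X},x}$ is regular of dimension $n$ (as $x$ is a closed point of the Cartier divisor $E \cong \mathbb{P}^{n-1}_k$ in the integral scheme $\widetilde{X}$); and since $\widetilde{Y} \cap E$, $\widetilde{Z} \cap E$ are Cartier divisors in integral schemes while $\mco_Y$, $\mco_Z$ are universally catenary (hence quasi-unmixed) local domains, so that their blow-up exceptional divisors are pure of the expected dimension, a standard dimension count gives $\dim \mco_{\widetilde{Y},x} = \dim Y$ and $\dim \mco_{\widetilde{Z},x} = \dim Z$. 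Hence $\dim \mco_{\widetilde{Y},x} + \dim \mco_{\widetilde{Z},x} = n = \dim \mco_{\widetilde{X},x}$ and $\ell(\mco_{\widetilde{Y},x} \otimes \mco_{\widetilde{Z},x}) < \infty$, so Corollary \ref{pos_blowup} applies and yields $\chi^{\mco_{\widetilde{X},x}}(\mco_{\widetilde{Y},x}, \mco_{\widetilde{Z},x}) > 0$ for every $x$. As $\widetilde{Y} \cap \widetilde{Z} \neq \emptyset$, the displayed sum is strictly positive, contradicting its vanishing. Thus $\dim(\gr \mco_Y \otimes_{\gr \mco_X} \gr \mco_Z) \neq 1$, and with the hypothesis $\leq 1$ we conclude it is $0$.

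I expect the one step needing genuine care to be the dimension bookkeeping at the points of $\widetilde{Y} \cap \widetilde{Z}$ — confirming that each strict transform passes through every such point with exactly the expected codimension, so that the hypotheses of Serre's conjecture, and therefore Corollary \ref{pos_blowup}, truly hold at that stalk; this is where the quasi-unmixedness of $\mco_Y$ and $\mco_Z$ (and purity of the exceptional divisors) is used. (It also bears recording that only the set-theoretic description of $\widetilde{Y} \cap \widetilde{Z}$ is needed, which is all that enters Fulton's formula and the reduction to stalks.) Everything else is formal once the intersection is known to be finite.
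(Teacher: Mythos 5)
Your proposal is correct and follows essentially the same route as the paper: pass to the blowup, use the hypothesis to see that $\widetilde{Y}\cap\widetilde{Z}$ is finite, expand $\chi^{\mco_{\widetilde{X}}}(\mco_{\widetilde{Y}},\mco_{\widetilde{Z}})$ as a sum of stalkwise intersection multiplicities, apply Corollary \ref{pos_blowup} to conclude each term is positive, and derive a contradiction from Fulton's decomposition formula. The only cosmetic differences are that you phrase it as a proof by contradiction rather than an equivalence, and you supply your own dimension bookkeeping at the points of $\widetilde{Y}\cap\widetilde{Z}$ where the paper simply cites Lemma 3.8 of \cite{Skalit}.
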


\section{Generalities for Complete Tensor Products}\label{generalities}
\setcounter{derpp}{0}
The basic reference for this section is \cite[V]{Serre}. Let $(R,\fm)$ be a complete regular local ring and let $A = R[[X_1, \cdots, X_m]]$ and $B = R[[Y_1, \cdots, Y_n]]$. For finitely-generated $A$ and $B$ modules $M$ and $N$, recall that we define the ``completed-$\Tor$ functors'' via the formula:
\[ \widehat{\Tor}_i^R(M,N) := \lim_{\stackrel{\longleftarrow}{p,q}} \Tor^R_i(M/\fm_A^pM, N/\fm_B^qN) \]
where $\fm_A$ and $\fm_B$ denote the maximal ideals of $A$ and $B$. When $M$ is fixed, the functors $\wTor_i^R(M,-)$ define a $\delta$-functor from the category of finitely-generated $B$-modules to the category of finitely-generated $A \oth_R B$-modules. Indeed, from the Artin-Rees lemma, it is easy to deduce that a short exact sequence $0 \to N' \to N \to N'' \to 0$ of finitely-generated $A$ modules gives rise to the expected long exact sequence in $\wTor$. Furthermore, the completed torsion functors exhibit the following depth-sensitivity property that we shall repeatedly exploit:

\begin{prop}\label{depth_detection}{\cite[V-9(g)]{Serre}} With $R$, $A$, $B$, and $M$ as above, suppose that $(x_1, \cdots x_r) \subset \fm \subset R$ is an $M$-regular sequence. Then $\wTor_j^R(M,E) = 0$ for all $j > \dim R - r$ and all finitely-generated $B$-modules $E$.
\end{prop}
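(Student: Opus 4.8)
The plan is to reduce everything to a local statement at a maximal ideal of $R$ and then exploit the structure of a minimal free resolution of $M$ over $R$. Since $\wTor_j^R(M,E)$ is computed as a completed limit of ordinary $\Tor^R_j$ modules, and since $R$ is a regular local ring of dimension $d := \dim R$, the key point is that the projective dimension of $M$ over $R$ governs the vanishing. Concretely, I would argue as follows. First I would note that it suffices to prove $\Tor_j^R(M/\fm_A^p M, E') = 0$ for $j > d - r$ and all finitely-generated $R$-modules $E'$ of the appropriate type (namely quotients $N/\fm_B^q N$), since the completed $\Tor$ is a limit of these. So the problem becomes: if $(x_1, \dots, x_r)$ is an $M$-regular sequence, then $\operatorname{pd}_R M \le d - r$, whence all higher $\Tor^R_j(M, -)$ vanish for $j > d - r$. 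This latter implication is essentially the Auslander--Buchsbaum formula together with the transfer of regular sequences: $\operatorname{depth}_R M \ge r$ (any $M$-regular sequence in $\fm$ witnesses this, after checking $M \ne 0$ and that the $x_i$ remain a regular sequence on $M$ itself and not merely on the finite-length quotients — here one uses Artin--Rees or the fact that $M$ is a finitely generated module over the Noetherian local ring $A$ and $\fm A \subset \fm_A$), so by Auslander--Buchsbaum $\operatorname{pd}_R M = d - \operatorname{depth}_R M \le d - r$.

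The one subtlety is that $M$ is an $A$-module, not an $R$-module of finite type; it is only finitely generated over $A = R[[X_1,\dots,X_m]]$. So "$\operatorname{pd}_R M$" and "$\operatorname{depth}_R M$" are not quite the right objects, and this is where the completed torsion functors genuinely enter. I would handle this by working with the quotients $M_p := M/\fm_A^p M$, which \emph{are} finite-length $A$-modules, hence finite-length — in particular finitely generated — over $R$ (since $R \to A$ makes $R/\fm$ a field over which... wait, more carefully: $M_p$ is annihilated by a power of $\fm_A$, hence has finite length over $A$, hence is a finitely generated $R$-module). The regular sequence $(x_1, \dots, x_r) \subset \fm \subset R$ need not be $M_p$-regular (these modules have finite length!), so the direct Auslander--Buchsbaum argument on each $M_p$ fails. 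Instead, following Serre's original argument, I would bound $\Tor_j^R(M_p, E')$ by passing to the inverse limit and using the fact that the $x_i$ form an $M$-regular sequence at the level of $A$-modules: one shows $\wTor_j^R(M, -)$ can be computed from a resolution of $M$ by free $R[[X_1, \dots, X_m]]$-modules that is "adapted" to the regular sequence, or equivalently that Koszul-type arguments on $x_1, \dots, x_r$ annihilate the relevant limit in degrees $> d - r$.

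Here is the cleanest route I would actually write down. Induct on $r$. For $r = 0$ there is nothing to prove since $\wTor_j^R(M, E) = 0$ for $j > d$ automatically ($R$ has global dimension $d$). For the inductive step, let $x := x_1$; it is a nonzerodivisor on $M$, so $0 \to M \xrightarrow{x} M \to M/xM \to 0$ is exact, giving a long exact sequence in $\wTor^R(-, E)$ (using the $\delta$-functor property established just before the proposition). Now $M/xM$ is a finitely generated $A$-module on which $(x_2, \dots, x_r)$ is a regular sequence, and — this is the crucial reduction — $\wTor_j^R(M/xM, E) \cong \wTor_j^{R/xR}(M/xM, E)$ does not quite hold, but rather there is a change-of-rings spectral sequence or exact sequence relating $\wTor^R$ over $R$ to $\wTor$ over $R/xR$; since $R/xR$ is regular of dimension $d - 1$ and $(x_2, \dots, x_r)$ is an $(M/xM)$-regular sequence there, the inductive hypothesis gives $\wTor_j^{R/xR}(M/xM, E) = 0$ for $j > (d-1) - (r-1) = d - r$. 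Feeding this back, the long exact sequence shows multiplication by $x$ is surjective on $\wTor_j^R(M,E)$ for $j > d - r$ and injective for $j > d - r + 1$; but $\wTor_j^R(M,E)$ is a finitely generated module over $A \oth_R B$ on which $x \in \fm$ lies in the maximal ideal, so Nakayama forces $\wTor_j^R(M,E) = 0$ for $j > d - r$.

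The main obstacle I anticipate is justifying the change-of-rings step cleanly at the level of \emph{completed} $\Tor$: ordinary $\Tor$ change-of-rings for a nonzerodivisor is standard, but one must check it survives the inverse limit $\lim_{p,q}$, which requires a Mittag--Leffler / Artin--Rees argument to ensure the limit is exact and commutes appropriately. Serre carries this out in \cite[V]{Serre}, so I would cite that rather than reprove it; the honest content of the proof is then just the Koszul/Nakayama bookkeeping above.
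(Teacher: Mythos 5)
The paper simply cites this statement to Serre~\cite[V-9(g)]{Serre} and gives no proof, so I compare your argument against what the statement demands.

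Your inductive skeleton (a long exact sequence from $0 \to M \xrightarrow{x} M \to M/xM \to 0$, followed by Nakayama) is the right shape, but there is a genuine gap at the step you yourself flag as delicate, and unfortunately it is not merely a technical Mittag--Leffler point that can be outsourced to Serre: it is a real off-by-one. Concretely, after applying the inductive hypothesis to $M/xM$ with the sequence $(x_2,\dots,x_r)$ of length $r-1$ \emph{over $R$}, you only get $\wTor_j^R(M/xM, E) = 0$ for $j > d - r + 1$. Feeding that into the long exact sequence
\[
\wTor_{j+1}^R(M/xM,E) \to \wTor_j^R(M,E) \xrightarrow{x} \wTor_j^R(M,E) \to \wTor_j^R(M/xM,E)
\]
gives $x$ \emph{injective} on $\wTor_j^R(M,E)$ for $j > d - r$ and \emph{surjective} only for $j > d - r + 1$ --- you have these two reversed. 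Nakayama needs surjectivity, so the argument proves vanishing only for $j > d - r + 1$, missing the boundary degree $j = d - r + 1$. To see that this degree really is missed, take $R = k[[s,t]]$, $A = B = R$, $M = R/(s)$, $x = t$, $r=1$; then $M/xM = k$ and $\wTor_2^R(k,k) = k \neq 0$, so the long exact sequence cannot force $x$ to be surjective on $\wTor_2^R(M,E)$.

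The change-of-rings gambit does not close this gap either. First, $\wTor_j^{R/xR}(M/xM, E)$ is not defined for a $B$-module $E$ not killed by $x$; what you actually have is a two-row spectral sequence (or equivalently a long exact sequence) involving the $\overline{B}$-modules $E/xE$ and $\ker(x\vert_E)$:
\[
\cdots \to \wTor_{j-1}^{R/xR}(M/xM, \ker(x\vert_E)) \to \wTor_j^R(M/xM,E) \to \wTor_j^{R/xR}(M/xM, E/xE) \to \cdots
\]
Applying the inductive hypothesis over $R/xR$ (dimension $d-1$, length $r-1$) gives vanishing of the $E/xE$ column for $j > d-r$, but the $\ker(x\vert_E)$ column only vanishes for $j-1 > d-r$, i.e., again $j > d - r + 1$. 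So the two routes land on the same bound and neither reaches $j > d-r$. The extra degree of vanishing genuinely requires finer control of the defining inverse system $\lim_{p,q}\Tor_j^R(M/\fm_A^pM, E/\fm_B^qE)$ --- roughly, one must show that the kernels of $x$ on the finite-length quotients $M/\fm_A^pM$ become pro-zero (via Artin--Rees) so that the obstruction terms die in the limit --- and this is precisely the content you defer to Serre. In other words, the bookkeeping you do write out is not the ``honest content''; it produces only the weaker bound, and without the pro-nilpotence argument the off-by-one cannot be repaired.
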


As an $R$-module, $B$ admits a regular sequence of length equal to the dimension of $R$. Thus, by Proposition \ref{depth_detection}, we see that $\wTor_i^R(M,P)$ vanishes for $i > 0$ and $P$ any projective $B$-module of finite rank. At once we see that our $\delta$-functor is coeffaceable and hence universal (cf. \cite[\S 2.4]{Weibel}, \cite{Tohoku}):

\begin{prop}\label{universal}\cite[V-10]{Serre} Let $R$, $A$, $B$, and $M$ be as above. Then the $\wTor_i^R(M,-)$ are the left-derived functors of $M \oth_R -$. \end{prop}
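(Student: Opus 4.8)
The plan is to recognize $\{\wTor_i^R(M,-)\}_{i\geq 0}$ as a universal homological $\delta$-functor and to match it with the left-derived functors of $M\oth_R -$, exactly as one does for ordinary $\Tor$. Most of what is needed is already at hand. First, it was noted above that $\{\wTor_i^R(M,-)\}_{i\geq 0}$ is a homological $\delta$-functor from the abelian category of finitely generated $B$-modules to that of finitely generated $A\oth_R B$-modules, the long exact sequences being furnished by Artin--Rees. Second, by the observation preceding the statement, Proposition \ref{depth_detection} (with the roles of $A$ and $B$ interchanged) gives $\wTor_i^R(M,P) = 0$ for all $i>0$ whenever $P$ is a finite free $B$-module, since such a $P$ carries an $R$-regular sequence of length $\dim R$. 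As every finitely generated $B$-module is a quotient of a finite free one, this is precisely the assertion that the $\delta$-functor is coeffaceable, hence universal \cite[\S 2.4]{Weibel}, \cite{Tohoku}.

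Next I would pin down the degree-zero functor and produce a second universal $\delta$-functor to compare against. From the definition of the completed tensor product, $\wTor_0^R(M,N)$ is the inverse limit of the $M/\fm_A^pM\otimes_R N/\fm_B^qN$, which is $M\oth_R N$; and the tail $\wTor_1^R(M,N'')\to M\oth_R N'\to M\oth_R N\to M\oth_R N''\to 0$ of the long exact sequence shows that $M\oth_R -$ is right exact. The category of finitely generated $B$-modules has enough projectives (the finite free modules, $B$ being Noetherian local), so the left-derived functors $L_i(M\oth_R -)$ are defined; they form a homological $\delta$-functor with $L_0(M\oth_R -) = M\oth_R -$ by right-exactness, and, being derived functors, vanish on finite free modules in positive degrees. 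Hence $\{L_i(M\oth_R -)\}$ is likewise a universal $\delta$-functor.

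Two universal $\delta$-functors that agree in degree zero are canonically isomorphic, so comparing $\{\wTor_i^R(M,-)\}$ with $\{L_i(M\oth_R -)\}$ yields $\wTor_i^R(M,-)\cong L_i(M\oth_R -)$ for all $i$, which is the assertion. The only step that is not purely formal is the identification $\wTor_0^R(M,N)=M\oth_R N$ together with the right-exactness of $M\oth_R -$; granting that and Proposition \ref{depth_detection}, the conclusion is a routine application of the universality machinery. (One could instead bypass the abstract argument and dimension-shift along a finite free resolution $P_\bullet\to N$ to obtain $\wTor_i^R(M,N)\cong H_i(M\oth_R P_\bullet)$ by hand, but the $\delta$-functor formulation is cleaner and is the route Serre takes.)
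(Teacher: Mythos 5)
Your proposal is correct and follows essentially the same route the paper takes (and which the cited reference \cite[V-10]{Serre} takes): the $\delta$-functor structure from Artin--Rees, coeffaceability via Proposition \ref{depth_detection} applied with the roles of $A$ and $B$ interchanged to kill $\wTor_i^R(M,P)$ for $P$ finite free over $B$, and then universality together with the identification of the degree-zero term. The extra care you take in spelling out that $\wTor_0^R(M,-) = M \oth_R -$ is right exact is a detail the paper leaves implicit but is exactly what is needed to invoke the uniqueness of universal $\delta$-functors.
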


\begin{lem}\label{recast} Let $R$, $A$, and $B$ be as above and put $C = A \oth_R B$. Then for finitely-generated $A$ and $B$ modules $M$ and $N$, there are canonical identifications for all $p$:
\[ \Tor_p^C(M \oth_R B, A \oth_R N) \cong \wTor_p^R(M,N). \]
\end{lem}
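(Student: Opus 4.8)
The idea is to identify both sides as the values of a universal $\delta$-functor and then invoke Proposition \ref{universal}. Fix the finitely-generated $A$-module $M$ and regard $\Tor_p^C(M \oth_R B, A \oth_R N)$ as a functor of the finitely-generated $B$-module $N$; call it $T_p(N)$. Since $- \oth_R N$ is right-exact and $A \oth_R -$ is exact (both $A$ and $B$ are flat $R$-modules, being power series rings), the collection $\{T_p\}_{p \geq 0}$ forms a homological $\delta$-functor from finitely-generated $B$-modules to finitely-generated $C$-modules: a short exact sequence of $B$-modules induces one of $C$-modules $0 \to A \oth_R N' \to A \oth_R N \to A \oth_R N'' \to 0$ (using, say, the Artin–Rees/completion bookkeeping exactly as in the discussion of the completed-$\Tor$ $\delta$-functor preceding Proposition \ref{universal}), and then the long exact $\Tor^C$-sequence gives the connecting maps. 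By Proposition \ref{universal}, it suffices to check two things: that $T_0(N) \cong M \oth_R N$ naturally in $N$, and that the $\delta$-functor $\{T_p\}$ is coeffaceable, i.e.\ $T_p(P) = 0$ for $p > 0$ whenever $P$ is a finite-rank free $B$-module. Given these, universality forces a unique isomorphism of $\delta$-functors $T_p \cong \wTor_p^R(M,-)$ extending the degree-zero identification, which is the claim.

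\textbf{Degree zero.} One computes $T_0(N) = (M \oth_R B) \otimes_C (A \oth_R N)$. Writing $C = A \oth_R B$, the right-exactness of completed tensor products and the standard base-change/associativity identities for completed tensor products over the complete local ring $R$ give $(M \oth_R B) \otimes_{A \oth_R B} (A \oth_R N) \cong M \oth_R N$; concretely, $M \oth_R B$ is obtained from $C$ by $\oth_A$-ing with $M$, and $A \oth_R N$ from $C$ by $\oth_B$-ing with $N$, so tensoring them together over $C$ leaves $M \oth_R N$. This matches $\wTor_0^R(M,N) = M \oth_R N$.

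\textbf{Coeffaceability.} For $P = B^{\oplus s}$ we have $A \oth_R P \cong C^{\oplus s}$, a finite free $C$-module, hence $\Tor_p^C(M \oth_R B, C^{\oplus s}) = 0$ for $p > 0$. This is exactly the vanishing needed. (It is the mirror image of the observation, made just before Proposition \ref{universal}, that $\wTor_i^R(M,P) = 0$ for $i>0$ and $P$ free of finite rank, so the two $\delta$-functors are effaced on the same class of modules.)

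\textbf{Main obstacle.} The only genuinely delicate point is the interaction between the \emph{ordinary} $\Tor^C$ on the left and the \emph{completed} tensor products defining $C$, $M \oth_R B$, and $A \oth_R N$: one must be sure that $\Tor_p^C$ of these completed objects is again finitely generated over $C$ and that the connecting homomorphisms in the long exact sequence are compatible with the completions — i.e.\ that no uncompleted-vs-completed discrepancy creeps in. This is handled by noting that $C$ is again a (Noetherian, complete local) power series ring over $R$, that $M \oth_R B$ and $A \oth_R N$ are finitely generated over $C$, and that for finitely generated modules over a Noetherian ring ordinary $\Tor$ already has the required finiteness and exactness, so the $\delta$-functor formalism of \cite[\S2.4]{Weibel} applies verbatim. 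Everything else is the routine verification that right-exactness plus effaceability upgrades the degree-zero isomorphism to all degrees.
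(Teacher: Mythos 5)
Your proof is correct and is essentially the paper's argument in different packaging: the paper factors $N \mapsto (M\oth_R B)\otimes_C(A\oth_R N)$ as a composite $G\circ F$ with $F = A\oth_R -$ exact and projective-preserving, and lets the degenerating Grothendieck spectral sequence identify $L_pG\circ F$ with $L_p(GF) = \wTor_p^R(M,-)$, whereas you run the same three facts (degree-zero agreement, exactness, coeffaceability on free modules) directly through the universal $\delta$-functor criterion. The mathematical content — flatness of $A$ over $R$, $A\oth_R B^{\oplus s}\cong C^{\oplus s}$, and the associativity isomorphism $(M\oth_R B)\otimes_C(A\oth_R N)\cong M\oth_R N$ — is identical in both.
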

\begin{proof}Let $C = A \oth_R B$. Consider the functors
\[	\begin{array}{rrcll}
	F: & \operatorname{f.g.} B-\operatorname{mod} & \longrightarrow & \operatorname{f.g.} C-\operatorname{mod} & N \mapsto A \oth_R N \\
	G: & \operatorname{f.g.} C-\operatorname{mod} & \longrightarrow & \operatorname{f.g.} C-\operatorname{mod} & E \mapsto (M \oth_R B) \otimes_C E \end{array} \]
It is clear that $F$ maps projectives to projectives and that $G(F(N))$ is naturally isomorphic to $M \oth_R N$. Invoking Proposition \ref{universal} and the spectral sequence for the composition of functors, we have
\[ E^2_{pq} = L_pG(L_qF(N)) \Rightarrow L_{p+q}(GF)(N) = \wTor^R_{p+q}(M,N). \]
On the other hand, $F$ is exact, so the spectral sequence degenerates to give isomorphisms
\[ \Tor_p^C(M \oth_R B, A \oth_R N) \cong L_pG(F(N)) \cong \wTor^R_p(M,N). \]
  \end{proof}

\begin{cor}\label{ct_support}With the notations of Lemma \ref{recast}, there are inclusions
\[ \Supp(\wTor^R_i(M,N)) \subset \Supp(M \oth_R N) \]
where the supports are regarded as subsets of $\Spec C = \Spec A \oth_R B$.
\end{cor}
\begin{proof}This follows from Lemma \ref{recast} and the basic fact that for any two finitely-generated $C$-modules $E_1$ and $E_2$, the $\Tor_i^C(E_1,E_2)$ vanish outside of $\Supp(E_1 \otimes_C E_2)$.
  \end{proof}

We shall require finer information than merely knowing the supports of the $\wTor_i^R(M,N)$. The next result, which follows from Gabber's celebrated non-negativity theorem \cite{Hochster}, allows us to extract cycle-theoretic information about the $\wTor_i^R(M,N)$ in terms of the multiplicities with which their minimal components appear.

\begin{cor}\label{gabber_cor}Suppose that $(R,\fm)$ be a complete, regular local ring. Let $A = R[[X_1, \cdots, X_m]]$ and $B = R[[Y_1, \cdots, Y_n]]$. Suppose $M$ and $N$ are finitely-generated $A$ and $B$ modules respectively. Then for every prime $\fp \subset A \oth_R B$ minimal in the support of $M \oth_R N$, there is an inequality
\[ \sum_{i=0}^{\infty}{(-1)^i \ell(\wTor_i^R(M,N))_{\fp}} \geq 0 \]
\end{cor}
\begin{proof}From Corollary \ref{ct_support}, we know that for any minimal $\fp \in \Supp(M \oth_R N)$, $\wTor_i^R(M,N)_{\fp}$ will be a finite-length $(A\oth_R B)_{\fp}$-module. Thus, the alternating sum defined above is a well-defined number. Now, by Lemma \ref{recast}, we have, for $i \geq 0$, isomorphisms
\[ \Tor_i^C(M \oth_R B, A \oth_R N) \cong \wTor_i^R(M,N) \]
where $C = A \oth_R B$. The advantage this perspective is that localizing the left-hand side at $\fp$ is well-understood:
\[ \Tor_i^{C_{\fp}}((M \oth_R B)_{\fp}, (A \oth_R N)_{\fp}) \cong \wTor_i^R(M,N)_{\fp}. \]
Since the right-hand side is a finite-length module, the supports of the $C_{\fp}$ modules $(M \oth_R B)_{\fp}$ and $(A \oth_R N)_{\fp}$ meet at a single point, so by Gabber's non-negativity of intersection multiplicity, we have
\[ \sum_{i=0}^{\infty}{(-1)^{i}\ell(\wTor_i^R(M,N)_{\fp})} = \chi^{C_{\fp}}((M \oth_R B)_{\fp},(A \oth_R N)_{\fp}) \geq 0. \]
  \end{proof}

The last ingredient we shall need is the so-called ``reduction to the diagonal'' spectral sequence. It appears in \cite[V-12]{Serre} for the case when $R$ is $1$-dimensional, but its derivation at no point uses any dimensionality assumption.
\begin{prop}\label{rtd_ss}Let $R$ be as above and suppose that $A = R[[X_1, \cdots, X_m]]$. Let $A \oth_R A \to A: x \oth y \mapsto xy$ be the canonical surjection, and let $\Delta$ be its kernel. Then for any pair of finitely-generated $A$-modules $M$ and $N$, there is a convergent spectral sequence
\[ E^2_{pq} = \Tor^{A \oth_R A}_p((A \oth_R A)/\Delta, \wTor^R_q(M,N)) \Rightarrow \Tor^A_{p+q}(M,N). \]
\end{prop}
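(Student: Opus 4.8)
The plan is to produce the spectral sequence as the Grothendieck spectral sequence of a composition of functors, in the spirit of Lemma~\ref{recast}. Write $C = A \oth_R A$, let $\Delta = \ker(C \to A)$ be the diagonal ideal, and regard $A = C/\Delta$ as a $C$-algebra. I would then work with the two functors
\[	\begin{array}{rrcll}
	F: & \operatorname{f.g.} A-\operatorname{mod} & \longrightarrow & \operatorname{f.g.} C-\operatorname{mod} & N \mapsto M \oth_R N \\
	G: & \operatorname{f.g.} C-\operatorname{mod} & \longrightarrow & \operatorname{f.g.} A-\operatorname{mod} & E \mapsto A \otimes_C E = E/\Delta E \end{array} \]
so that $L_q F = \wTor_q^R(M,-)$ by Proposition~\ref{universal}, and $L_p G = \Tor_p^C(A,-)$ tautologically. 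Both $F$ and $G$ are right exact, hence so is the composite $GF = A \otimes_C (M \oth_R -)$; I will check below that $GF$ is naturally isomorphic to $M \otimes_A -$, so that $L_{p+q}(GF) = \Tor_{p+q}^A(M,-)$. Granting this together with the statement that $F$ carries finite free $A$-modules to $G$-acyclic $C$-modules, i.e. $\Tor_{>0}^C(A, M \oth_R P) = 0$ for $P$ finite free, the composite-functor spectral sequence yields the proposition at once.

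Both missing ingredients will come out of a single change of coordinates on $C$. Writing $A = R[[X_1,\dots,X_m]]$ and $Y_i := 1 \oth X_i$, one has $C = R[[X_1,\dots,X_m,Y_1,\dots,Y_m]]$ with $\Delta = (X_1 - Y_1,\dots,X_m - Y_m)$; putting $Z_i := X_i \oth 1 - 1 \oth X_i$ then identifies $C$ with the power series ring $A[[Z_1,\dots,Z_m]]$ over the first copy of $A$, carrying $\Delta$ onto $(Z_1,\dots,Z_m)$. In particular $Z_1,\dots,Z_m$ is a $C$-regular sequence, so $\mathrm{Kos}_C(Z_1,\dots,Z_m)$ is a finite free resolution of $A$ over $C$ and $\Tor_\bullet^C(A,E) \cong H_\bullet\big(\mathrm{Kos}_C(Z_1,\dots,Z_m) \otimes_C E\big)$ for every $C$-module $E$. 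Next, applying the right exact functor $- \oth_R A$ to a finite presentation of any finitely generated $A$-module $L$ shows $L \oth_R A \cong L \otimes_A C \cong L[[Z_1,\dots,Z_m]]$ as $C$-modules, with each $Z_i$ acting as multiplication by the corresponding formal variable; hence $Z_1,\dots,Z_m$ is an $(L \oth_R A)$-regular sequence with quotient $L$. Taking $L = M$ and passing to finite direct sums gives at once $\Tor_{>0}^C(A, M \oth_R P) = 0$ and $A \otimes_C (M \oth_R P) \cong M \otimes_A P$ --- the $G$-acyclicity of $F(P)$ and the identification $GF \cong M \otimes_A -$ needed above (the latter because two right exact functors that agree naturally on finite free modules agree).

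Assembling these, the spectral sequence of the composite $G \circ F$ reads
\[ E^2_{pq} = L_p G\big(L_q F(N)\big) = \Tor_p^C\big(A, \wTor_q^R(M,N)\big) \ \Longrightarrow\ L_{p+q}(GF)(N) = \Tor_{p+q}^A(M,N), \]
which is first-quadrant, hence convergent, all of its entries being finitely generated over the Noetherian ring $C$. I expect the only genuine work to lie in the middle paragraph: pinning down the completed-tensor-product bookkeeping --- the isomorphism $C \cong A[[Z_1,\dots,Z_m]]$ carrying $\Delta$ to $(Z_1,\dots,Z_m)$, and the $C$-linear isomorphism $L \oth_R A \cong L[[Z_1,\dots,Z_m]]$ --- after which the argument is purely formal. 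As Serre notes, no hypothesis on $\dim R$ is used at any point.
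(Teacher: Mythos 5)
Your proof is correct and, apart from being written out in detail, is exactly the derivation the paper has in mind: Proposition~\ref{rtd_ss} is stated without proof and deferred to Serre \cite[V-12]{Serre}, whose argument is precisely the composite-functor (Grothendieck) spectral sequence for $G \circ F$ with $F = M \oth_R -$ and $G = (A\oth_R A)/\Delta \otimes_{A\oth_R A} -$, hinging on the identification $A\oth_R A \cong A[[Z_1,\dots,Z_m]]$ with $\Delta = (Z_1,\dots,Z_m)$ a regular sequence. Your verification that $F$ carries finite free modules to $G$-acyclics via the Koszul resolution, and the identification $GF \cong M\otimes_A -$ by comparing right-exact functors on free modules, are the same two ingredients Serre uses, and as both you and the paper note, nothing in the argument depends on $\dim R$.
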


\section{The Two-Dimensional Case}\label{ct2}

\subsection{Geometry of Completed Tensor Products}
We begin with a lemma which is a mild generalization of a well-known flatness criterion for Cohen-Macaulay schemes over a regular base (cf. \cite[23.1]{Matsumura}):
\begin{lem}\label{flat_crit}Let $R \to \mco$ be a local homomorphism of Noetherian local rings where $(R,\fm)$ is regular of dimension $2$ and $\mco$ is normal. Then $\mco$ is $R$-flat if and only if $\dim(\mco/\fm \mco) = \dim \mco - 2$.
\end{lem}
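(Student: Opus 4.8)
The plan is to reduce everything to the local criterion of flatness, using that $(R,\fm)$ regular of dimension two forces $\fm = (x,y)$ for some regular sequence $x,y$ in $R$. The ``only if'' direction then requires only the dimension formula for flat local homomorphisms: if $\mco$ is $R$-flat then $\dim\mco = \dim R + \dim(\mco/\fm\mco) = 2 + \dim(\mco/\fm\mco)$, with normality playing no role. For the converse, the local criterion of flatness says $\mco$ is $R$-flat if and only if $\Tor_1^R(R/\fm,\mco) = 0$; since $x,y$ is an $R$-regular sequence, the Koszul complex $K_\bullet(x,y;R)$ is a free resolution of $R/\fm$, so $\Tor_\bullet^R(R/\fm,\mco)$ is the Koszul homology $H_\bullet(K_\bullet(x,y;\mco))$, and because $\fm\mco$ is a proper ideal of $\mco$ the vanishing of $H_1$ is equivalent to $x,y$ being an $\mco$-regular sequence. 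Thus it suffices to show that if $\mco$ is normal and $\dim(\mco/\fm\mco) = \dim\mco - 2$, then $x,y$ is an $\mco$-regular sequence.

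Write $d = \dim\mco$. From $\dim(\mco/\fm\mco) = d-2 \geq 0$ we get $d \geq 2$, and since a normal Noetherian local ring is an integral domain, $\mco$ is a domain; in particular $\fm\mco \neq 0$, for otherwise $\mco/\fm\mco = \mco$ would have dimension $d$, not $d-2$. The crucial step is the claim that $\operatorname{ht}(\fm\mco) = 2$. The inequality $\operatorname{ht}(\fm\mco) \leq 2$ is automatic from $\operatorname{ht}(\fm\mco) + \dim(\mco/\fm\mco) \leq \dim\mco$. For the reverse inequality, suppose toward a contradiction that $\operatorname{ht}(\fm\mco) = 1$ --- it is at least one since $\mco$ is a domain and $\fm\mco \neq 0$ --- and pick a height-one prime $\fp$ minimal over $\fm\mco$. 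Then $\fp/\fm\mco$ is a minimal prime of $\mco/\fm\mco$, so $\dim(\mco/\fp) \leq \dim(\mco/\fm\mco) = d-2$; but the dimension formula $\operatorname{ht}\fp + \dim(\mco/\fp) = \dim\mco$ gives $\dim(\mco/\fp) = d-1$, a contradiction. (This formula uses that $\mco$ is catenary, which is automatic in every situation where the lemma is applied, $\mco$ being excellent there; alternatively one simply adds ``catenary'' to the hypotheses.) Hence $\operatorname{ht}(\fm\mco) = 2$, i.e.\ no prime of $\mco$ of height $\leq 1$ contains $\fm\mco$; in particular neither $x$ nor $y$ is $0$, as $\mco\,x$ and $\mco\,y$ are height-one ideals.

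Normality now enters through Serre's criterion. Since $\mco$ satisfies $(S_2)$ and $x$ is a nonzerodivisor, $\mco/x\mco$ satisfies $(S_1)$, hence has no embedded primes; thus the associated primes of $\mco/x\mco$ coincide with its minimal primes, each of which has height one in $\mco$ by Krull's principal ideal theorem. Because $\operatorname{ht}(\fm\mco) = 2$, the ideal $\fm\mco = (x,y)\mco$ lies in no height-one prime, so $y$ avoids every associated prime of $\mco/x\mco$; hence $y$ is a nonzerodivisor on $\mco/x\mco$, and $x,y$ is an $\mco$-regular sequence, as required. (More slickly: every prime $\fq \supseteq \fm\mco$ has $\operatorname{depth}\mco_\fq \geq \min(2,\operatorname{ht}\fq) = 2$ by $(S_2)$, so $\operatorname{grade}(\fm\mco,\mco) \geq 2$, and since $\fm\mco$ is generated by the two elements $x,y$ this forces $x,y$ to be $\mco$-regular.)

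I expect the main obstacle to be precisely the height computation $\operatorname{ht}(\fm\mco) = 2$ --- more exactly, ruling out $\operatorname{ht}(\fm\mco) = 1$, which is the single place where the argument needs the dimension formula $\operatorname{ht}\fp + \dim(\mco/\fp) = \dim\mco$ rather than merely the always-valid inequality. The remaining ingredients --- the local criterion of flatness, the dimension formula for flat local homomorphisms, the Koszul characterization of regular sequences, and Serre's criterion for normality --- are standard (see \cite{Matsumura}), and assembling them is routine.
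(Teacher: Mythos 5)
Your proof is correct and ultimately rests on the same ingredients as the paper's (local flatness criterion, $S_2$, Krull's principal ideal theorem), but it is organized differently: the paper works one element at a time, showing $s$ is $\mco$-regular, passing to $\mco/s\mco$ via the $S_2 \Rightarrow S_1$ stepdown, and then applying ``the same reasoning'' to $t$; you instead front-load a computation that $\operatorname{ht}(\fm\mco)=2$ and then read off regularity (or, more slickly, grade) from that. The structuring around the height of $\fm\mco$ is arguably cleaner, and the grade-theoretic alternative at the end is a nice observation.

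More substantively, you have spotted a genuine subtlety that the paper's proof glosses over: both the paper's ``same reasoning'' step and your height computation implicitly invoke the dimension formula $\operatorname{ht}\fp + \dim(\mco/\fp) = \dim\mco$, which requires $\mco$ to be catenary (equivalently, for a local domain, that all height-one primes $\fq$ satisfy $\dim(\mco/\fq) = \dim\mco - 1$). Normal Noetherian local domains need not be catenary in general (Nagata), so the lemma as stated does contain a small gap; as you note, it is harmless here since in every application $\mco$ is a finite extension of a complete local ring and hence excellent, but the hypothesis ought to be recorded. Your flagging of this is exactly right, and your proof is a sound replacement for the paper's.
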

\begin{proof}If $\mco$ is $R$-flat, then the desired equality follows from the classical formula for fibre-dimension of a flat morphism \cite[15.1]{Matsumura}. For the converse, let us suppose that $\fm = (s,t)$. Since $R/(s,t)$ is a field, it will suffice, by invoking the local flatness criterion \cite[III.10.3.A]{Hartshorne}, to show that $(s,t)$ is an $\mco$-regular sequence. Recall that a ring $B$ satisfies Serre Condition $\mathbf{S_n}$ if for all $\fp \in \Spec B$, $\operatorname{depth} B_{\fp} \geq \min\left\{ n, \dim B_{\fp} \right\}$. It follows that for $n > 0$, any ring satisfying $\mathbf{S_n}$ can have no embedded primes. Now, $\mco$ is normal by assumption and therefore satisfies $\mathbf{S_2}$. As $\mco/(s,t)\mco$ has dimension $\dim \mco -2$, we conclude that $s$ lies outside every minimal prime of $\mco$ and hence, by the $\mathbf{S_2}$ condition, outside every associated prime. Thus, $s$ is a non-zerodivisor on $\mco$, and it is easily seen that $\mco/s\mco$ satisfies $\mathbf{S_1}$. It follows by the same reasoning that $t$ is $\mco/s\mco$-regular, completing the proof.
  \end{proof}

\begin{prop}\label{dimbound}Let $(R,\fm)$ be a complete $2$-dimensional regular local ring and suppose that $A=R[[X_1, \cdots, X_m]]$ and $B = R[[Y_1, \cdots, Y_n]]$. Let $\mco_1$ and $\mco_2$ be finite $A$ and $B$-algebras which are domains. Then $\dim(\mco_1 \oth_R \mco_2) \geq \dim(\mco_1) + \dim(\mco_2) - 2$.
\end{prop}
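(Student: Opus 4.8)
The plan is to realize $\mco_1 \oth_R \mco_2$ as an ordinary tensor product over the regular local ring
\[ C := A \oth_R B = R[[X_1, \cdots, X_m, Y_1, \cdots, Y_n]], \]
which has dimension $m + n + 2$, and then to quote the intersection dimension inequality for regular local rings. First I would record the identification. Since $\mco_1$ is module-finite over $A$ and $\mco_2$ is module-finite over $B$, and the completed tensor product is right exact and commutes with finite base change in each variable (cf.\ \cite[V]{Serre}), there is a canonical isomorphism $\mco_1 \oth_R \mco_2 \cong (\mco_1 \otimes_A C) \otimes_C (\mco_2 \otimes_B C)$. Abbreviate $P_1 := \mco_1 \otimes_A C \cong \mco_1[[Y_1, \cdots, Y_n]]$ and $P_2 := \mco_2 \otimes_B C \cong \mco_2[[X_1, \cdots, X_m]]$. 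Since $\mco_1$ and $\mco_2$ are domains, so are the power series rings $P_1$ and $P_2$; both are module-finite over $C$; and $\dim P_1 = \dim \mco_1 + n$, $\dim P_2 = \dim \mco_2 + m$.

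Next I would translate the dimension count into a statement about two prime ideals of $C$. Let $\fp_i := \ker(C \to P_i)$, a prime of $C$; then $\fp_i$ annihilates $P_i$, the map $C/\fp_i \hookrightarrow P_i$ is module-finite, and $\dim C/\fp_i = \dim P_i$. As a $C$-module, $P_1 \otimes_C P_2$ is supported exactly on $V(\fp_1) \cap V(\fp_2) = V(\fp_1 + \fp_2)$: the inclusion $\subseteq$ is immediate, while for $\supseteq$ one observes that at any prime $\fq \supseteq \fp_1 + \fp_2$ the localizations $(P_1)_\fq$ and $(P_2)_\fq$ are nonzero finitely generated modules over the local ring $C_\fq$, so their tensor product over $C_\fq$ is nonzero. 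Since $P_1 \otimes_C P_2$ is module-finite over $C$, its Krull dimension is the dimension of this support, so that $\dim(\mco_1 \oth_R \mco_2) = \dim C/(\fp_1 + \fp_2)$.

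The last ingredient is the inequality $\dim C/(\fp_1 + \fp_2) \geq \dim C/\fp_1 + \dim C/\fp_2 - \dim C$, which holds because $C$ is regular. This is a standard consequence of part (a) of Serre's Conjecture (established by Serre in \cite{Serre} via the Cohen Structure Theorem): localizing $C$ at a minimal prime $\fq$ over $\fp_1 + \fp_2$ with $\dim C/\fq = \dim C/(\fp_1 + \fp_2)$, the module $(C/\fp_1)_\fq \otimes_{C_\fq} (C/\fp_2)_\fq$ has finite length, so Serre's theorem gives $\dim (C/\fp_1)_\fq + \dim (C/\fp_2)_\fq \leq \dim C_\fq$, and rewriting heights as codimensions --- legitimate since $C$ is a regular, hence catenary, domain --- yields the inequality. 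Putting the three steps together,
\begin{align*}
\dim(\mco_1 \oth_R \mco_2) = \dim C/(\fp_1 + \fp_2) &\geq \dim P_1 + \dim P_2 - \dim C \\
&= (\dim \mco_1 + n) + (\dim \mco_2 + m) - (m + n + 2) \\
&= \dim \mco_1 + \dim \mco_2 - 2.
\end{align*}

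I expect the real work to lie in the first, bookkeeping, step --- pinning down $\mco_1 \oth_R \mco_2$ as $P_1 \otimes_C P_2$ over the explicit power series ring $C$ and verifying that the $n$ and $m$ power-series variables adjoined to the two factors exactly compensate the $-\dim C$ term --- together with the realization that the final step is nothing but Serre's dimension theorem applied on $\Spec C$; the support computation and the localization then go through routinely.
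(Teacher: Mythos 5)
Your proof is correct, and it takes a genuinely different route from the paper's. The author reduces to the case where $\mco_1$ and $\mco_2$ are normal (via finite-surjectivity of normalization), then splits into two cases depending on whether one of the $\mco_i$ is $R$-flat: in the flat case, the computation is pushed down to a DVR $\overline{R} = R/sR$ and a result from an earlier paper of the author; in the non-flat case, Lemma \ref{flat_crit} bounds the fibre dimensions $\dim(\mco_i/\fm\mco_i)$ from below and the inequality is read off from the special fibre $(\mco_1/\fm\mco_1)\oth_k(\mco_2/\fm\mco_2)$. Your argument instead uses the identification $\mco_1 \oth_R \mco_2 \cong (\mco_1\otimes_A C)\otimes_C(\mco_2\otimes_B C)$ (which is precisely the $p=0$ case of Lemma \ref{recast}, once one notes $M \oth_R B \cong M\otimes_A C$ for finite $M$), reduces the dimension count to $\dim C/(\fp_1+\fp_2)$, and then invokes Serre's dimension inequality together with the catenary property of the regular ring $C$. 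There are two things your route buys: it avoids the normalization reduction and the flatness dichotomy entirely, and it is independent of $\dim R$ --- the same argument gives $\dim(\mco_1\oth_R\mco_2)\geq\dim\mco_1+\dim\mco_2-\dim R$ over any complete regular local base, whereas the paper's Case 1 leans on a previously-established one-dimensional result and its Case 2 on the specifically two-dimensional Lemma \ref{flat_crit}. The only small thing worth being explicit about is the catenary bookkeeping in the last step (replacing heights in $C_\fq$ by coheights in $C$ requires both $C$ and $C/\fp_i$ to be equidimensional and catenary, which holds since $C$ is a regular local domain), but you do flag this and it is routine.
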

\begin{proof}First, observe that since each $\mco_i$ is complete, it then follows that the normalization $\widetilde{\mco}_i$ is finite over $\mco_i$. From this, we obtain the isomorphism $\widetilde{\mco}_1 \oth_R \mco_2 \cong \widetilde{\mco}_1 \otimes_{\mco_1} (\mco_1 \oth_R \mco_2)$ and the corresponding fibre diagram:
\[\xymatrix{
\Spec \widetilde{\mco}_1 \oth_R \mco_2 \ar[r] \ar[d] & \Spec \widetilde{\mco}_1 \ar[d] \\
\Spec \mco_1 \oth_R \mco_2 \ar[r] & \Spec \mco_1
}\]
Since the normalization $\Spec \widetilde{\mco}_1 \to \Spec \mco_1$ is finite and surjective, so too is $\Spec \widetilde{\mco}_1 \oth_R \mco_2 \to \Spec \mco_1 \oth_R \mco_2$. Similarly $\Spec \widetilde{\mco}_1 \oth_R \widetilde{\mco}_2 \to \Spec \widetilde{\mco}_1 \oth_R \mco_2$ is also finite and surjective whence we obtain the equalities
\[\dim \mco_1 \oth_R \mco_2 = \dim \widetilde{\mco}_1 \oth_R \mco_2 = \dim \widetilde{\mco}_1 \oth_R \widetilde{\mco}_2. \]
We are therefore free to assume that both $\mco_1$ and $\mco_2$ are normal.\\
\newline
\textbf{Case 1:} (Assume $\mco_1$ is $R$-flat.) Let $\fm = (s,t)$ and let $\overline{R}$ be the DVR $R/sR$. We then have an isomorphism:
\[ \frac{\mco_1 \oth_R \mco_2}{s(\mco_1 \oth_R \mco_2)} \cong \left( \frac{\mco_1}{s \mco_1} \right) \oth_{\overline{R}} \left( \frac{\mco_2}{s \mco_2} \right). \]
Since $\mco_1/s\mco_1$ is flat over $\overline{R}$, it follows from \cite[2.1(b)]{Skalit} that
\[ \dim \left( \frac{\mco_1}{s \mco_1} \right) \oth_{\overline{R}} \left( \frac{\mco_2}{s \mco_2} \right) = \dim \left( \frac{\mco_1}{s \mco_1} \right) + \dim \left( \frac{\mco_2}{s\mco_2} \right) - 1 = \dim \mco_1 + \dim \left( \frac{\mco_2}{s \mco_2} \right) - 2. \]
By the flatness of $\mco_1$ over $R$, we see that $s$ either annihilates $\mco_1 \oth_R \mco_2$ or acts as a non-zerodivisor, depending on how $s$ acts on the domain $\mco_2$. In either case, we get an equality
\[ \dim(\mco_1 \oth_R \mco_2) = \dim \mco_1 + \dim \mco_2 - 2.\]

\textbf{Case 2:} (Assume that neither $\mco_1$ nor $\mco_2$ is $R$-flat.) Since we are assuming that the $\mco_i$ are normal, we have that $\dim(\mco_i/\fm \mco_i) \geq \dim \mco_i - 1$ for $i=1,2$ by Lemma \ref{flat_crit}. Putting $k = R/\fm$, we have the isomorphism
\[ \frac{\mco_1 \oth_R \mco_2}{\fm(\mco_1 \oth_R \mco_2)} \cong \frac{\mco_1}{\fm \mco_1} \oth_k \frac{\mco_2}{\fm \mco_2} \]
and therefore
\[ \begin{array}{rclcl}\ds\dim  (\mco_1 \oth_R \mco_2) &\geq& \ds \dim \left( \frac{\mco_1 \oth_R \mco_2}{\fm(\mco_1 \oth_R \mco_2)} \right) &  & \\
\ds &=& \ds \dim \left( \frac{\mco_1}{\fm \mco_1} \right) + \dim \left( \frac{\mco_2}{\fm \mco_2} \right) &\geq& \dim \mco_1 + \dim \mco_2 - 2.\end{array} \]
  \end{proof}

\subsection{Some Additional Vanishing Results}
The following vanishing result for completed $\Tor$ uses a technique which is similar in spirit to Proposition 3.3 in \cite{HochsterCBMS}.
\begin{lem}\label{depth_detection2}Let $(R,\fm)$ be a complete $2$-dimensional regular local ring. Suppose that $A = R[[X_1, \cdots X_m]]$ and $B = R[[Y_1, \cdots Y_n]]$. Suppose that $\mco_1$ and $\mco_2$ are domains, finite over $A$ and $B$ respectively. Let $J_i = \ker(R \to \mco_i)$.
\begin{enumerate}
\item[(a)] If $J_1 \neq \fm$, then $\wTor_2^R(\mco_1,-) = 0$. Similarly, $\wTor^R_2(-,\mco_2) = 0$ if $J_2 \neq \fm$.
\item[(b)] If $J_1,J_2 \neq \fm$ and $J_1 \neq J_2$, then $\wTor^R_1(\mco_1,\mco_2) = 0$.
\end{enumerate}
\end{lem}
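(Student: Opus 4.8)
The plan is to reduce everything to Serre's depth-detection result (Proposition~\ref{depth_detection}), once we observe that, $\mco_i$ being a domain, the kernel $J_i\subset R$ is a prime ideal; since $R$ is regular local of dimension two, the hypothesis $J_i\neq\fm$ forces $\operatorname{ht}J_i\leq 1$, so $J_i\subsetneq\fm$, and when $J_i\neq 0$ the ideal $J_i$ is principal because $R$ is a UFD.

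For part~(a) I would pick any $x\in\fm\setminus J_1$. Since $\mco_1$ is a domain and $x$ has nonzero image in $\mco_1$, the element $x$ is a non-zerodivisor on $\mco_1$; that is, $(x)$ is an $\mco_1$-regular sequence of length one contained in $\fm$. Proposition~\ref{depth_detection} with $r=1$ and $\dim R=2$ then gives $\wTor_j^R(\mco_1,-)=0$ for all $j>1$, in particular $\wTor_2^R(\mco_1,-)=0$; the assertion for $\mco_2$ is symmetric.

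For part~(b) the idea is to produce, in each case, an element $f\in R$ that simultaneously \emph{annihilates} one of the modules and is a \emph{non-zerodivisor} on the other, and then to play these two facts against each other through the long exact sequence (a trick in the spirit of the cited Proposition of Hochster). Suppose first $J_1\neq 0$, so $J_1=(f)$ is a height-one prime. Then $f$ is a non-zerodivisor on $\mco_2$: otherwise $f$ would lie in $J_2$, whence $(f)=J_1\subseteq J_2$ would force $J_1=J_2$ (both being height-one primes), contrary to hypothesis; and since $\mco_2$ is a domain, an element with nonzero image is automatically a non-zerodivisor. Thus multiplication by $f$ yields a short exact sequence of finitely-generated $B$-modules $0\to\mco_2\xrightarrow{f}\mco_2\to\mco_2/f\mco_2\to 0$. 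Applying the $\delta$-functor $\wTor^R(\mco_1,-)$ and invoking part~(a) (legitimate since $J_1\neq\fm$) to kill $\wTor_2^R(\mco_1,\mco_2/f\mco_2)$, the long exact sequence shows that multiplication by $f$ is \emph{injective} on $\wTor_1^R(\mco_1,\mco_2)$. On the other hand $f\in J_1$ annihilates $\mco_1$, hence annihilates $\wTor_1^R(\mco_1,\mco_2)$; an injective map that is also the zero map forces $\wTor_1^R(\mco_1,\mco_2)=0$. If instead $J_1=0$, then $J_2\neq 0$ (else $J_1=J_2$), and one runs the mirror-image argument with $f$ a generator of $J_2$ — a non-zerodivisor on the domain $\mco_1$ since $f\neq 0$ — using the $\delta$-functor $\wTor^R(-,\mco_2)$ and part~(a) for $\mco_2$.

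The only point requiring genuine care is the case bookkeeping in~(b): one must verify that in every admissible configuration of $(J_1,J_2)$ — both height one and distinct, or exactly one of them the zero ideal — such an $f$ is available, and that the remaining configuration $J_1=J_2=0$ is precisely what the hypothesis $J_1\neq J_2$ excludes. Everything else is a routine application of Proposition~\ref{depth_detection} and of the long exact sequence for completed $\Tor$ recalled in Section~\ref{generalities}.
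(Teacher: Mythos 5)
Your proof follows essentially the same strategy as the paper: part (a) is a direct application of Proposition~\ref{depth_detection}, and part (b) pits the annihilation of $\wTor_1^R(\mco_1,\mco_2)$ by $f$ against the regularity of $f$ deduced from the long exact sequence, using part (a) to kill the relevant $\wTor_2$ term. Your case bookkeeping in (b) is in fact a bit more careful than the paper's, which simply picks a nonzero $f \in J_1 \setminus J_2$ without addressing the configuration $J_1 = (0) \subsetneq J_2$; you handle that case explicitly by running the mirror argument with a generator of $J_2$ and $\wTor^R(-,\mco_2)$.
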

\begin{proof}(a) If $J_1 \neq \fm$ and $\mco_1$ is a domain, we can choose some $g \in \fm$ that is a non-zerodivisor on $\mco_1$. The result now follows from Proposition \ref{depth_detection}.

\noindent(b) Without loss of generality, assume $J_1 \not\subset J_2$. Choose some nonzero $f \in J_1-J_2$ and consider the exact sequence
\[ 0 \to \mco_2 \stackrel{f}{\rightarrow} \mco_2 \to \overline{\mco_2} \to 0 \]
where $\overline{\mco_2} = \mco_2/(f \mco_2)$. Applying $\mco_1 \oth_R -$ gives a long exact sequence in $\wTor$ (Proposition \ref{universal}):
\[ \cdots \to \wTor^R_2(\mco_1,\overline{\mco_2}) \to \wTor_1^R(\mco_1,\mco_2) \stackrel{f}{\rightarrow} \wTor_1^R(\mco_1,\mco_2) \to \cdots \]
From part (a), $\wTor_2^R(\mco_1,\overline{\mco_2}) = 0$. Hence, $f$ is a non-zerodivisor on $\wTor^R_1(\mco_1,\mco_2)$. On the other hand, $f \in J_1$ must annihilate $\wTor_1^R(\mco_1,\mco_2)$, thereby forcing this module to be $0$.
  \end{proof}

\begin{lem}\label{shrink}Let $(R,\fm)$ be a complete $2$-dimensional regular local ring. Suppose that $A = R[[X_1, \cdots X_m]]$ and $B = R[[Y_1, \cdots, Y_n]]$. Suppose that $\mco_1$ and $\mco_2$ are domains, finite over $A$ and $B$ respectively. Let $J_i = \ker(R \to \mco_i)$, and suppose that $J_1 = J_2 = (0)$. Let $\phi: R \to A \oth_R B$ be the canonical map and let $\fp \in \Supp(\mco_1 \oth_R \mco_2)$ be a minimal prime. If $\phi^{-1}(\fp) \neq (0)$, then $\fp \notin \Supp(\wTor_1^R(\mco_1,\mco_2))$.
\end{lem}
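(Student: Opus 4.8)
The plan is to produce a single nonzero element of $R$ that simultaneously acts injectively on $\wTor_1^R(\mco_1,\mco_2)$ and lies in $\fp$, and then use the finiteness of length at $\fp$ to force the vanishing. Since $\phi^{-1}(\fp) \neq (0)$, pick a nonzero $g \in \phi^{-1}(\fp)$; then $g \in \fm$, $g \neq 0$, and $\phi(g) \in \fp$. Because $J_2 = (0)$ we have $R \hookrightarrow \mco_2$, so the image of $g$ in the domain $\mco_2$ is nonzero, hence a non-zerodivisor. This yields a short exact sequence $0 \to \mco_2 \stackrel{g}{\rightarrow} \mco_2 \to \mco_2/g\mco_2 \to 0$ of finitely-generated $B$-modules.

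Applying $\mco_1 \oth_R -$ and using the long exact sequence in completed $\Tor$ (Section \ref{generalities}), we extract the exact piece
\[ \wTor_2^R(\mco_1, \mco_2/g\mco_2) \longrightarrow \wTor_1^R(\mco_1,\mco_2) \stackrel{g}{\longrightarrow} \wTor_1^R(\mco_1,\mco_2), \]
in which the labelled map is multiplication by $g$, equivalently the action of $\phi(g)$ under the $A \oth_R B$-module structure. The key input is that the left-hand term vanishes: since $J_1 = (0) \neq \fm$, Lemma \ref{depth_detection2}(a) tells us that $\wTor_2^R(\mco_1,-)$ is identically zero. Consequently $g$ acts injectively on $\wTor_1^R(\mco_1,\mco_2)$, and hence, by exactness of localization, also on $\wTor_1^R(\mco_1,\mco_2)_{\fp}$.

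It remains to bring in the minimality of $\fp$. By Corollary \ref{ct_support}, $\Supp \wTor_1^R(\mco_1,\mco_2) \subseteq \Supp(\mco_1 \oth_R \mco_2)$, so $\wTor_1^R(\mco_1,\mco_2)_{\fp}$ is a finitely-generated $(A \oth_R B)_{\fp}$-module with support contained in the closed point, i.e.\ of finite length. Since $\phi(g) \in \fp$, multiplication by $g$ is nilpotent on any finite-length $(A\oth_R B)_{\fp}$-module; an injective nilpotent endomorphism of a module can occur only when the module is zero. Therefore $\wTor_1^R(\mco_1,\mco_2)_{\fp} = 0$, which is exactly the statement that $\fp \notin \Supp \wTor_1^R(\mco_1,\mco_2)$.

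Two comments on this outline. No case analysis on the height of $\phi^{-1}(\fp)$ is needed: the argument uses only that $\phi^{-1}(\fp)$ contains one nonzero element. And there is no serious obstacle here — the real work has already been absorbed into Lemma \ref{depth_detection2}(a), which is where the two-dimensionality of $R$ is spent (via Proposition \ref{depth_detection} applied to a regular sequence of length one). The one point deserving care is checking that the endomorphism in the long exact sequence is genuinely multiplication by $\phi(g)$, so that the injectivity obtained in the second paragraph can legitimately be combined with the membership $\phi(g)\in\fp$ from the first.
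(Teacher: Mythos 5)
Your argument is correct and follows essentially the same route as the paper's: pick a nonzero $f \in \phi^{-1}(\fp)$, run the long exact sequence in $\wTor$ against $0 \to \mco_2 \stackrel{f}{\to} \mco_2 \to \mco_2/f\mco_2 \to 0$, kill $\wTor_2^R(\mco_1,-)$ via Lemma \ref{depth_detection2}(a) to get injectivity of $f$, and then combine minimality of $\fp$ (via Corollary \ref{ct_support}) with nilpotence of $f$ on the localized Artinian module. Your remarks making explicit that $J_2=(0)$ guarantees $f$ is $\mco_2$-regular, and that the boundary-map endomorphism really is multiplication by $f$, are correct observations that the paper leaves implicit.
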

\begin{proof}Let $0 \neq f \in \phi^{-1}(\fp)$. As in the proof of Lemma \ref{depth_detection2}, form the exact sequence
\[ 0 \to \mco_2 \stackrel{f}{\rightarrow} \mco_2 \to \overline{\mco_2} \to 0 \]
and consider the long exact sequence
\[ \cdots \to \wTor^R_2(\mco_1,\overline{\mco_2}) \to \wTor_1^R(\mco_1,\mco_2) \stackrel{f}{\rightarrow} \wTor_1^R(\mco_1,\mco_2) \to \cdots \]
Once again, $\wTor^R_2(\mco_1, \overline{\mco_2}) = 0$ thanks to Lemma \ref{depth_detection2}(a); so $f$ is a non-zerodivisor on $\wTor_1^R(\mco_1,\mco_2)$. Since $\Supp(\wTor_1^R(\mco_1,\mco_2)) \subset \Supp(\mco_1 \oth_R \mco_2)$ by Corollary \ref{ct_support}, we see that if $\fp \in \Supp(\wTor_1^R(\mco_1,\mco_2))$, then it would have to be minimal. Thus $\wTor_1^R(\mco_1,\mco_2)_\fp$ is Artinian (or zero). Since $f \in \phi^{-1}(\fp)$, $f$ is a non-unit in $(A \oth_R B)_\fp$ and therefore must act nilpotently on $\wTor_1^R(\mco_1,\mco_2)_{\fp}$. 
We conclude that $\wTor_1^R(\mco_1,\mco_2)_{\fp} = 0$.  \end{proof}

\section{Proof of the Main Theorem}\label{proofA}
\begin{lem}\label{decency}Let $(R,\fm)$ be a complete $2$-dimensional regular local ring. Suppose that $A = R[[X_1, \cdots X_m]]$. Suppose that $\mco_1$ and $\mco_2$ are domains, finite over $A$, such that $\dim \mco_1 + \dim \mco_2 = \dim A$ and $\ell(\mco_1 \otimes_A \mco_2) < \infty$. Put $J_i = \ker(R \to \mco_i)$. Then $J_1 = J_2$ if and only if $J_1 = J_2 = (0)$.
\end{lem}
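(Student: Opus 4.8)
The \emph{if} direction is trivial, so the substance is to show that $J_1 = J_2$ forces $J_1 = J_2 = (0)$. Write $J := J_1 = J_2$ and suppose, toward a contradiction, that $J \neq (0)$. Since $\mco_1$ is a domain, $J$ is a prime of $R$, and as $R$ is a two-dimensional regular (hence factorial) local ring, $J$ is either $\fm$ or a height-one prime generated by a single prime element; in either case $R/J$ is a complete local domain with $\dim(R/J) \leq 1$. My plan is to reduce modulo $J$ and then apply the decency half of Serre's Conjecture (part (a), proven in \cite{Serre}) over an auxiliary \emph{regular} ring. Both $\mco_1$ and $\mco_2$ are annihilated by $JA$, so $\mco_1 \otimes_A \mco_2 = \mco_1 \otimes_T \mco_2$ where $T := A/JA = (R/J)[[X_1, \cdots, X_m]]$; hence $\mco_1 \otimes_T \mco_2$ has finite length over the local ring $T$, i.e. $\Supp_T \mco_1 \cap \Supp_T \mco_2 = \{\fm_T\}$. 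If I can conclude $\dim \mco_1 + \dim \mco_2 \leq \dim T = \dim(R/J) + m \leq m+1$, this will contradict the hypothesis $\dim \mco_1 + \dim \mco_2 = \dim A = m+2$, and the proof will be complete. This last inequality is precisely where two-dimensionality of $R$ enters: a nonzero prime $J$ cuts the dimension of the base down by at least one.

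The main obstacle is that $T$ need not be regular: when $J = (g)$, the domain $R/J$ is in general a non-normal one-dimensional local ring, and the decency inequality can genuinely fail over non-regular (even normal) domains, so one cannot simply invoke Serre's theorem for $T$ itself. I would circumvent this by replacing $T$ by a module-finite regular cover. Let $R_0 := R/J$ if $J = \fm$ (a field), and let $R_0$ be the normalization of $R/J$ — a complete discrete valuation ring, finite over $R/J$ because $R/J$ is excellent — if $J$ has height one. In either case set $T_0 := R_0[[X_1, \cdots, X_m]]$; then $T_0$ is a regular local ring, finite over $T$, with $\dim T_0 = \dim T$, and $\Spec T_0 \to \Spec T$ is finite and surjective.

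Finally I would transfer the finite-length condition along $T \hookrightarrow T_0$. Put $\mco_i' := \mco_i \otimes_T T_0$. Because $\Spec T_0 \to \Spec T$ is finite and surjective, $\Supp_{T_0} \mco_i'$ is the full preimage of $\Supp_T \mco_i$, so $\dim \mco_i' = \dim \mco_i$; similarly $\mco_1' \otimes_{T_0} \mco_2' \cong (\mco_1 \otimes_T \mco_2) \otimes_T T_0$ is supported on the preimage of $\{\fm_T\}$, which is just $\{\fm_{T_0}\}$ since $T_0$ is local, so $\ell_{T_0}(\mco_1' \otimes_{T_0} \mco_2') < \infty$. Now part (a) of Serre's Conjecture, applied over the regular local ring $T_0$, yields $\dim \mco_1 + \dim \mco_2 = \dim \mco_1' + \dim \mco_2' \leq \dim T_0 = \dim(R/J) + m \leq m+1$, contradicting $\dim \mco_1 + \dim \mco_2 = m + 2$. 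Hence $J = (0)$, and then $J_1 = J_2 = (0)$ as claimed. The routine points left to fill in are standard: that tensor products of modules killed by $JA$ are computed over $A/JA$, that finite surjective morphisms preserve the dimension of supports, that finiteness and injectivity of $R/J \hookrightarrow R_0$ pass to the power series rings, and that the normalization of a one-dimensional complete local domain is a finite complete DVR.
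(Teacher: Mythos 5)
Your proof is correct and essentially the same as the paper's: both pass from $R$ to a one-dimensional quotient, normalize to get a DVR (or field) as base, and invoke the decency half of Serre's Conjecture over the resulting regular local ring $R_0[[X_1,\dots,X_m]]$ of dimension $\le m+1$, contradicting $\dim\mco_1+\dim\mco_2 = m+2$. The only cosmetic difference is that you split into cases ($J=\fm$ versus $J$ of height one), whereas the paper handles both uniformly by picking a single prime element $f\in J$ and working with $S=R/fR$ and its normalization $\widetilde S$, so no case distinction is needed.
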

\begin{proof}Suppose that $J_1 = J_2 \neq (0)$. Since the $J_i$ are prime ideals and $R$ is a UFD, we can find a prime element $0 \neq f \in J_1 = J_2$. Put $S = R/fR$ and consider the integral closure $\tilde{S}$ in its field of fractions. Since $S$ is complete, $S \to \tilde{S}$ is a finite morphism, thereby forcing $\tilde{S}$ to be a local domain. Since $S$ has dimension $1$, $\tilde{S}$ is a DVR.

Put $B = S[[X_1, \cdots X_m]]$. By assumption, each $\mco_i$ is a finite $B$-module. Let $B' = \tilde{S}[[X_1, \cdots, X_m]]$, which is finite over $B$; therefore, the extensions $\mco_i' := \mco_i \otimes_B B'$ are finite over $\mco_i$. Now, $\mco_1' \otimes_{B'} \mco_2'$ is Artinian by virtue of being finite over $\mco_1 \otimes_B \mco_2$. Since $B'$ is a regular local ring, we must have $\dim \mco_1' + \dim \mco_2' \leq \dim B' < \dim A$ by the decency condition (a) of Serre's Conjecture (see \cite[V-18, Thm. 3]{Serre}).

From the argument used in Proposition \ref{dimbound}, $\Spec \mco'_i \to \Spec \mco_i$ is finite-surjective because $\Spec B' \to \Spec B$ is. Hence, $\dim \mco'_i = \dim \mco_i$, thereby contradicting the fact that $\dim \mco_1 + \dim \mco_2 = \dim A$.
  \end{proof}

\begin{mthm}\label{main}Let $(R,\fm)$ be a complete regular local ring of dimension $2$. Let $A = R[[X_1, \cdots X_m]]$. Suppose that $M$ and $N$ are finitely-generated $A$-modules such that $\ell(M \otimes_A N) < \infty$ and $\dim M + \dim N = \dim A$. Then $\chi^A(M,N) > 0$.
\end{mthm}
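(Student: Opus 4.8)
\emph{Reductions.} The plan is to combine the reduction-to-the-diagonal spectral sequence with Gabber's non-negativity, after a standard dévissage. First I would filter $M$ and $N$ by submodules with prime-cyclic quotients and use additivity of $\chi^A$ on short exact sequences together with the Vanishing property of Serre's Conjecture --- which is known unconditionally \cite{Gillet, Roberts} --- to discard every term in which a quotient has dimension smaller than $\dim M$ or $\dim N$. This writes $\chi^A(M,N)$ as a linear combination, with strictly positive coefficients, of terms $\chi^A(A/\fp, A/\fq)$ with $\fp, \fq$ prime, $\dim A/\fp + \dim A/\fq = \dim A$ and $\ell(A/\fp \otimes_A A/\fq) < \infty$, and at least one such term is present since $M$ and $N$ each have a top-dimensional component. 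So it suffices to treat $M = \mco_1 = A/\fp$, $N = \mco_2 = A/\fq$. Next I would replace each $\mco_i$ by its normalization; the difference is supported in dimension $< \dim \mco_i$, so Vanishing shows $\chi^A$ is unaffected and the hypotheses persist. Thus I may assume $\mco_1, \mco_2$ are normal complete domains, module-finite over $A$, and I set $J_i = \ker(R \to \mco_i)$.

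\emph{Reduction to the diagonal, and the easy cases.} Using Proposition \ref{rtd_ss}, the isomorphism $(A\oth_R A)/\Delta \cong A$, and that $\Delta = (X_1 - Y_1, \dots, X_m - Y_m)$ is generated by a regular sequence, I would pass through the (finite-length) spectral sequence to obtain the formula recorded in the Introduction, $\chi^A(\mco_1,\mco_2) = \sum_{q}(-1)^q e_\Delta(\wTor^R_q(\mco_1,\mco_2))$, where $e_\Delta$ is the Hilbert--Samuel multiplicity along $\Delta$ (vanishing in dimension $< m$), computed as a Koszul Euler characteristic \cite{Serre}. By Proposition \ref{depth_detection} only $q \in \{0,1,2\}$ contribute; $\wTor_0 = \mco_1 \oth_R \mco_2$ has dimension $m$ by Proposition \ref{dimbound} and finite length, so $e_\Delta(\wTor_0) > 0$; and $\wTor_2 = 0$, since by Lemma \ref{depth_detection2}(a) its non-vanishing would force $J_1 = J_2 = \fm$, contradicting Lemma \ref{decency}. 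Hence $\chi^A(\mco_1,\mco_2) = e_\Delta(\wTor_0) - e_\Delta(\wTor_1)$, and everything reduces to bounding $e_\Delta(\wTor_1)$. If $\mco_1$ or $\mco_2$ is $R$-flat, then $\fm$ is a length-$2$ regular sequence on it, so $\wTor^R_{j}(\mco_1,\mco_2) = 0$ for $j \geq 1$ by Proposition \ref{depth_detection} (and symmetry of $\wTor$), and we are done. If neither is flat, then $J_1, J_2 \ne \fm$: were $J_1 = \fm$, then $\mco_1$ would be a $k$-algebra, $\wTor_0 \cong \mco_1 \oth_k (\mco_2/\fm\mco_2)$ would have dimension $\dim \mco_1 + \dim(\mco_2/\fm\mco_2) = m$, forcing $\dim(\mco_2/\fm\mco_2) = \dim \mco_2 - 2$ and hence $\mco_2$ $R$-flat by Lemma \ref{flat_crit}, a contradiction. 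If moreover $J_1 \ne J_2$, then $\wTor_1 = 0$ by Lemma \ref{depth_detection2}(b), so again we are done.

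\emph{The essential case $J_1 = J_2$.} By Lemma \ref{decency} this means $J_1 = J_2 = (0)$, so $R \hookrightarrow \mco_i$. Then a regular parameter of $R$ is a nonzerodivisor on the domain $\mco_i$, so $\dim(\mco_i/\fm\mco_i) \leq \dim \mco_i - 1$; since $\mco_i$ is not flat, Lemma \ref{flat_crit} excludes $\dim \mco_i - 2$, whence $\dim(\mco_i/\fm\mco_i) = \dim \mco_i - 1$. Therefore $\mco_1 \oth_R \mco_2 \otimes_R k \cong (\mco_1/\fm\mco_1) \oth_k (\mco_2/\fm\mco_2)$ has dimension $(\dim \mco_1 - 1) + (\dim \mco_2 - 1) = m = \dim(\mco_1 \oth_R \mco_2)$, so $\Supp(\mco_1 \oth_R \mco_2)$ has an $m$-dimensional minimal prime $\fp$ containing $\fm$; in particular $\fp$ contracts to a nonzero ideal of $R$, so $(\wTor_1)_\fp = 0$ by Lemma \ref{shrink}. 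Meanwhile Corollary \ref{gabber_cor}, applied at every minimal prime $\fp'$ of $\Supp(\mco_1 \oth_R \mco_2)$ and using $\wTor_2 = 0$, gives $\ell((\wTor_0)_{\fp'}) \geq \ell((\wTor_1)_{\fp'})$. Expanding $e_\Delta(\wTor_0)$ and $e_\Delta(\wTor_1)$ by the associativity formula over the $m$-dimensional minimal primes of $\Supp(\mco_1 \oth_R \mco_2)$ then gives
\[ \chi^A(\mco_1,\mco_2) = \sum_{\fp'} \big( \ell((\wTor_0)_{\fp'}) - \ell((\wTor_1)_{\fp'}) \big)\, e_\Delta\big( (A\oth_R A)/\fp' \big) \ \geq\ \ell\big( (\mco_1 \oth_R \mco_2)_\fp \big)\, e_\Delta\big( (A\oth_R A)/\fp \big) \ >\ 0, \]
since each summand is nonnegative and $\Delta$ is a system of parameters for $(A\oth_R A)/\fp$ (its quotient by $\Delta$ embedding in $\mco_1 \otimes_A \mco_2$), so $e_\Delta((A\oth_R A)/\fp) > 0$.

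\emph{The main obstacle.} The crux is this last case: showing $\wTor^R_1(\mco_1,\mco_2)$ is negligible next to $\mco_1 \oth_R \mco_2$. Lemma \ref{shrink} only kills $\wTor_1$ at the minimal primes of $\mco_1 \oth_R \mco_2$ lying over the closed point of $\Spec R$; what keeps $\wTor_1$ from swamping the difference $e_\Delta(\wTor_0) - e_\Delta(\wTor_1)$ at the remaining minimal primes is precisely Gabber's non-negativity theorem, entering through Corollary \ref{gabber_cor}. The remaining work --- verifying that all other configurations of $(J_1, J_2)$ and of $R$-flatness are either vacuous or already dispatched by the vanishing results of Section \ref{ct2}, and that the fibre-dimension count is exact --- is bookkeeping, but it is what legitimizes the displayed estimate.
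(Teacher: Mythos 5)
Your proof is correct and follows the paper's strategy closely: the same d\'evissage to normal complete domains, the diagonal spectral sequence yielding $\chi^A(\mco_1,\mco_2) = \sum_q (-1)^q e_\Delta(\wTor^R_q(\mco_1,\mco_2))$, Lemma \ref{depth_detection2} and Lemma \ref{decency} to kill $\wTor_2$, and then --- in the essential case $J_1 = J_2 = (0)$ --- Corollary \ref{gabber_cor} for termwise non-negativity together with the fibre-dimension count and Lemma \ref{shrink} to produce one $m$-dimensional prime where $\wTor_1$ vanishes.

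The one place you genuinely depart from the paper is the edge case $\fm\mco_1 = 0$ (equivalently $J_1 = \fm$). The paper isolates this as its ``Case 1'': it picks $\pi \in \fm - \fm^2$, checks that $\pi$ is $\mco_2$-regular, uses the base-change spectral sequence to get $\chi^A(\mco_1,\mco_2) = \chi^{\overline{A}}(\mco_1,\overline{\mco_2})$ with $\overline{A} = A/\pi A$, and then appeals to Serre's already-proved positivity theorem for power series over a DVR. You instead keep everything inside the two-dimensional diagonal framework: from $\dim(\mco_1 \oth_R \mco_2) = m$ and $\wTor_0 \cong \mco_1 \oth_k (\mco_2/\fm\mco_2)$ you extract $\dim(\mco_2/\fm\mco_2) = \dim\mco_2 - 2$, so Lemma \ref{flat_crit} forces $\mco_2$ to be $R$-flat, whence $\wTor_1 = 0$ and you are done by the main display. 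This is a modest but genuine simplification --- it eliminates the separate case and the appeal to Serre's one-dimensional theorem, at the cost of one additional use of the flatness criterion. Both routes are valid; the rest of the argument is identical to the paper's.
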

\begin{proof}Recall that if $\fp \in \operatorname{Ass}(M)$, we have a short exact sequence
\[ 0 \to A/\fp \to M \to M'' \to 0 \]
Proceeding inductively and using the biadditivity of $\chi^A(-,-)$, we are reduced to proving the claim for $M = A/\fp$ and $N = A/\fq$ where $\fp$ and $\fq$ are primes. Next, we observe that $A/\fp$ is complete, making the map to the normalization $A/\fp \to \widetilde{A/\fp}$ finite. If we consider the exact sequence
\[0 \to A/\fp \to \widetilde{A/\fp} \to C \to 0, \]
we see that $\dim C < \dim A/\fp$ as $A/\fp \to \widetilde{A/\fp}$ is generically an isomorphism. Repeating the argument for $N=A/\fq$ and invoking the vanishing result (c) of Serre's Conjecture shows that $\chi^A(A/\fp,A/\fq) = \chi^A(\widetilde{A/\fp},\widetilde{A/\fq})$. It therefore suffices to prove the theorem for $M = \mco_1$ and $N = \mco_2$ where each $\mco_i$ has the structure of a \textbf{normal} integral domain. Of the two main cases to consider, let us first dispense with the trivial one.

\textbf{Case 1:} ($\fm \cdot \mco_1 = 0$). For this, we let $\pi \in \fm-\fm^2$ and put $\overline{A} = A/\pi A$. Note that $\pi$ cannot annihilate $\mco_2$; otherwise, both $\mco_1$ and $\mco_2$ would be supported on $\overline{A}$. Since part (a) of Conjecture \ref{scc} is known to hold in full generality, it would then follow that $\dim \mco_1 + \dim \mco_2 \leq \dim \overline{A} < \dim A$. Since $\pi$ is $\mco_2$-regular, the base change-spectral sequence
\[ E^2_{pq} = \Tor_p^{\overline{A}}(\mco_1, \Tor_q^A(\mco_2,\overline{A})) \Rightarrow \Tor_{p+q}^A(\mco_1,\mco_2) \]
degenerates to give isomorphisms
\[ \Tor^{\overline{A}}_p(\mco_1,\overline{\mco_2}) \cong \Tor_p^A(\mco_1,\mco_2) \]
and therefore an equality $\chi^A(\mco_1,\mco_2) = \chi^{\overline{A}}(\mco_1,\overline{\mco_2})$. Since $\overline{A}$ is a power series over the DVR $R/\pi R$, the right hand side is known to be positive from the work of Serre.

\textbf{Case 2:} ($\fm \cdot \mco_1 \neq 0$). Let $\Delta$ be the kernel of the canonical surjection $A \oth_R A \to A$. Clearly, $\Delta$ is generated by the $A \oth_R A$-regular sequence $\left\{X_i \oth 1 - 1 \oth X_i\right\}_{(1 \leq i \leq m)}$. We therefore obtain, for any $A \oth_R A$-module $L$, isomorphisms
\[ \Tor_i^{A \oth_R A}((A \oth_R A)/\Delta, L) \cong H_i(\Delta,L) \]
where the right-hand side is Koszul homology with respect to the sequence defining $\Delta$. Recall that if $\ell(L/\Delta L) < \infty$, then by a classical result of Serre \cite[IV-12]{Serre}, 
\[ \sum_{i=0}^{m}{(-1)^i \ell(\Tor_i^{A \oth_R A}((A \oth_R A)/\Delta, L))} = \sum_{i=0}^{m}{(-1)^i \ell(H_i(\Delta,L))} = e_{\Delta}(L,m) \]
where $e_{\Delta}(L,m)$ is $0$ when $\dim L < m$ and coincides with the Hilbert-Samuel multiplicity of $L$ with respect to $\Delta$ otherwise.

We now consider the reduction to the diagonal spectral sequence (Proposition \ref{rtd_ss}):
\[ E^2_{pq} = \Tor_p^{A \oth_R A}((A \oth_R A)/\Delta, \wTor_q^R(\mco_1,\mco_2)) \Rightarrow \Tor_{p+q}^A(\mco_1, \mco_2) \]
Note that $\Tor_0^{A \oth_R A}((A \oth_R A)/\Delta, \mco_1 \oth_R \mco_2) = \mco_1 \otimes_A \mco_2$ and thus is Artinian. Since we have by Corollary \ref{ct_support} that $\Supp(\wTor_i(\mco_1,\mco_2)) \subset \Supp(\mco_1 \oth_R \mco_2)$, we conclude that every term on the $E^2$ page of our spectral sequence has finite length. Furthermore, the $E^{\infty}$ page is just the associated graded of $\Tor_*^A(\mco_1,\mco_2)$ with respect to the filtration induced by the spectral sequence, so we can compute $\chi^A(\mco_1,\mco_2)$ by summing over the $E^{\infty}$ page:
\[ \begin{array}{rcl}
	\chi^A(\mco_1,\mco_2) & = & \ds\sum_{p,q \geq 0}{(-1)^{p+q} \ell(E^{\infty}_{pq})} \\
											  & = & \ds\sum_{p,q \geq 0}{(-1)^{p+q} \ell(E^2_{pq})} \\
												& = & \ds\sum_{q \geq 0}{(-1)^{q} \sum_{p \geq 0}{ (-1)^p \ell(\Tor_p^{A \oth_R A}((A \oth_R A)/\Delta, \wTor_q^R(\mco_1,\mco_2)))}} \\
												& = & \ds\sum_{q \geq 0}{(-1)^{q} \sum_{p \geq 0}{ (-1)^p \ell(H_p(\Delta,\wTor_q^R(\mco_1,\mco_2)))}} \\
												& = & \ds\sum_{q \geq 0}{(-1)^q e_{\Delta} (\wTor^R_q(\mco_1,\mco_2), m)} \end{array}. \]
												
Once again, let $J_i = \ker(R \to \mco_i)$. By hypothesis $J_1 \neq \fm$. We may assume that $J_2 \neq \fm$; otherwise, we are back in the situation of Case 1. By Lemma \ref{depth_detection2}(a), $\wTor^R_2(\mco_1, \mco_2) = 0$, so our formula for intersection multiplicity becomes
\begin{equation*}
\tag{*}\chi^A(\mco_1,\mco_2) = e_{\Delta}(\mco_1 \oth_R \mco_2, m) - e_{\Delta}(\wTor_1^R(\mco_1,\mco_2), m).
\end{equation*}
If $J_1 \neq J_2$, then by Lemma \ref{depth_detection2}(b), $\wTor_1^R(\mco_1,\mco_2) = 0$. Note that $\dim(\mco_1 \oth_R \mco_2) = m$: Lemma \ref{dimbound} shows the dimension is bounded below by $\dim \mco_1 + \dim \mco_2 - 2 = m$; the reverse inequality follows from the fact that $(\mco_1 \oth_R \mco_2)/\Delta$ is Artinian. In this case, it follows that $\chi^A(\mco_1,\mco_2) = e_{\Delta}(\mco_1 \oth_R \mco_2, m) > 0$. We therefore need only consider what can happen when $J_1 = J_2$, and by Lemma \ref{decency}, we are reduced to the case where $J_1 = J_2 = (0)$.

Since $\Supp(\wTor_1^R(\mco_1,\mco_2)) \subset \Supp(\mco_1 \oth_R \mco_2)$ by Corollary \ref{ct_support}, we can apply the associativity formula for Hilbert-Samuel multiplicity \cite[V-2]{Serre} to (*) and obtain the formula:
\[ \chi^A(\mco_1, \mco_2) = \sum_{\mathclap{\substack{\fp \in \Supp(\mco_1 \oth_R \mco_2) \\ \dim( (A\oth_R A)/\fp) = m}}}{\left[\ell((\mco_1 \oth_R \mco_2)_{\fp})-\ell(\wTor_1^R(\mco_1,\mco_2)_{\fp})\right]e_{\Delta}((A\oth_R A)/\fp, m)}.\]
From Lemma \ref{gabber_cor}, we know that for each $\fp$ satisfying $\dim((A \oth_R A)/\fp) = m$,
\[\ell((\mco_1 \oth_R \mco_2)_{\fp})-\ell(\wTor_1^R(\mco_1,\mco_2)_{\fp}) \geq 0.\] 
It therefore suffices to show that there exists at least one $m$-dimensional prime $\fq \in \Supp(\mco_1 \oth_R \mco_2)$ for which $\wTor_1^R(\mco_1,\mco_2)_{\fq} = 0$. If either $\mco_1$ or $\mco_2$ is $R$-flat, then, of course $\wTor_1^R(\mco_1,\mco_2)$ vanishes, and the positivity of $\chi^A(\mco_1,\mco_2)$ is obvious. If neither are flat, then by invoking the normality of the $\mco_i$, we must have that $\dim(\mco_i/\fm \mco_i) = \dim \mco_i - 1$ (Lemma \ref{flat_crit}). Hence,
\[\dim\left(\frac{\mco_1 \oth_R \mco_2}{\fm (\mco_1 \oth_R \mco_2)}\right) = \dim\left( \frac{\mco_1}{\fm \mco_1} \right) + \dim \left( \frac{\mco_2}{\fm \mco_2} \right) = \dim \mco_1 + \dim \mco_2 - 2 = m.\]
Since $\mco_1 \oth_R \mco_2$ is also $m$-dimensional, we conclude that $\fm(\mco_1 \oth_R \mco_2)$ lies within a minimal prime $\fq \in \Supp(\mco_1 \oth_R \mco_2)$ such that $\dim((A \oth_R A)/\fq) = m$. By Lemma \ref{shrink}, it follows that $\wTor_1^R(\mco_1,\mco_2)_{\fq} = 0$, and the proof is complete.
  \end{proof}

\section{Positivity of Intersections over a Two-Dimensional Base}\label{smooth_cor}
Suppose that $X \to Y$ is a smooth morphism of Noetherian schemes where $Y$ is regular of dimension two. We shall prove in this section for each $x \in X$, Serre's Positivity Conjecture holds for $\mco_{X,x}$. We note that the case of a one-dimensional base has been settled by using Fulton-MacPherson intersection theory \cite[Example 20.2.2]{Fulton}. We shall deduce the two-dimensional case from the corresponding result for power series rings in the previous section.

\begin{lem}\label{triviality}Let $(A,\fm)$ be a regular local ring. Suppose that $(B,\fn)$ is a regular local ring for which Serre's Positivity Conjecture is known to hold. If there is a flat, local morphism $A \to B$ with $\dim A = \dim B$, then the conjecture must also hold for $A$.
\end{lem}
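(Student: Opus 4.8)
The plan is to base-change the entire problem along the flat local map $A \to B$ and read off positivity from the known statement over $B$. So, let $M$ and $N$ be finitely-generated $A$-modules with $\ell(M \otimes_A N) < \infty$ and $\dim M + \dim N = \dim A$, and put $M' = M \otimes_A B$ and $N' = N \otimes_A B$, which are finitely-generated over $B$. Since $B$ is $A$-flat we have canonical isomorphisms $M' \otimes_B N' \cong (M \otimes_A N) \otimes_A B$ and, more generally, $\Tor_i^B(M',N') \cong \Tor_i^A(M,N) \otimes_A B$ for all $i$.

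First I would verify that $(B,M',N')$ satisfies the hypotheses of Serre's Positivity Conjecture. Because $A \to B$ is flat and local with $\dim A = \dim B$, the fibre-dimension formula \cite[15.1]{Matsumura}, applied to $A \to B$ itself, gives $\dim(B/\fm B) = \dim B - \dim A = 0$, so $B/\fm B$ has finite length. Since $M \otimes_A N$ has finite length it admits a composition series with quotients $A/\fm$; applying the exact functor $- \otimes_A B$ shows $M' \otimes_B N'$ is a finite iterated extension of copies of $B/\fm B$, hence of finite length. The same formula applied to $M$ yields $\dim_B M' = \dim_A M + \dim(B/\fm B) = \dim_A M$, and likewise $\dim_B N' = \dim_A N$, so $\dim M' + \dim N' = \dim A = \dim B$. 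Thus the Positivity Conjecture for $B$ gives $\chi^B(M',N') > 0$.

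It then remains to compare $\chi^B(M',N')$ with $\chi^A(M,N)$. Each $\Tor_i^A(M,N)$ is supported at $\fm$ (since $\Tor_0^A(M,N) = M \otimes_A N$ is) and hence has finite length, so by the same composition-series argument together with additivity of length in short exact sequences, $\ell_B\big(\Tor_i^A(M,N)\otimes_A B\big) = \ell_A\big(\Tor_i^A(M,N)\big)\cdot \ell_B(B/\fm B)$. Combining this with $\Tor_i^B(M',N') \cong \Tor_i^A(M,N)\otimes_A B$ and forming the alternating sum gives $\chi^B(M',N') = \ell_B(B/\fm B)\cdot \chi^A(M,N)$. As $\ell_B(B/\fm B)$ is a positive finite integer and the left-hand side is positive, we conclude $\chi^A(M,N) > 0$.

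The argument is essentially formal; the only input beyond flatness is the fibre-dimension formula, which is needed precisely to see that the equidimensionality hypothesis $\dim M + \dim N = \dim A$ (and the finite-length hypothesis) survive base change, so that the Positivity Conjecture for $B$ genuinely applies. There is no real obstacle here — the lemma is a packaging device that will let us replace $A$ by any regular local ring receiving a flat local map from it of the same dimension, which is exactly what is needed to pass from a smooth local ring to a power series ring in Section \ref{smooth_cor}.
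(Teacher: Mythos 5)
Your proof is correct and takes essentially the same route as the paper's: base-change along $A \to B$, verify via $\dim(B/\fm B)=0$ and $r = \ell_B(B/\fm B)$ that the hypotheses of the Positivity Conjecture are preserved, then use flat base change of $\Tor$ to obtain $\chi^B(M',N') = r\cdot\chi^A(M,N)$. The only cosmetic difference is that the paper deduces $\dim(M\otimes_A B)=\dim M$ by comparing Hilbert polynomials, whereas you invoke the fibre-dimension formula directly on $M$; both are standard one-line arguments.
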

\begin{proof}Note that $\dim(B/\fm B) = 0$ by hypothesis; put $r = \ell_{B}(B/\fm B)$. Given an Artinian $A$-module $E$, it follows from tensoring its composition series with $B$ that $\ell_B(E \otimes_A B) = r\ell_A(E)$. Hence, for any finitely-generated $A$-module $M$, the Hilbert functions $\ell_A(M/\fm^n M)$ and $\ell_B((M \otimes_A B)/\fm^n(M \otimes_A B)) = \ell_B(M/\fm^n M \otimes_A B)$ will differ by a factor of $r$. The corresponding Hilbert polynomials will have the same degree, thereby showing that $\dim M = \dim M \otimes_A B$.

Now let $M$ and $N$ be two finitely generated $A$-modules for which $\ell(M \otimes_A N) < \infty$ and $\dim M + \dim N = \dim A$. These two conditions will be preserved after base-extension to $B$, so $\chi^B(M \otimes_A B,N \otimes_A B) > 0$. On the other hand,
\[ \Tor_p^A(M,N) \otimes_A B \cong \Tor^B_p(M \otimes_A B, N \otimes_A B) \]
for all $p$, so we have $\chi^A(M,N) = (1/r)\chi^B(M \otimes_A B, N \otimes_A B) > 0$.
  \end{proof}

\subsection{Structure Theorems}
It is well-known that if $X$ is a smooth variety over a field $k$, then by the Cohen Structure Theorem, $\widehat{\mco}_{X,x}$ is a power series ring over the residue field $k(x)$. When the base is no longer a field, we have a similarly-flavored structure theorem, provided that we include an additional separability hypothesis.

\begin{prop}\label{extension}\cite[10.3.1]{EGAIII} Let $(R_0,\fm_0)$ be a Noetherian local ring. Let $L$ be an extension field of $R_0/\fm_0$. Then there is a local ring $(R,\fm)$, flat over $R_0$, such that $\fm_0 R = \fm$ and $R/\fm = L$.
\end{prop}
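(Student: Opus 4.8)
The plan is to construct $R$ by transfinite recursion, adjoining one element of $L$ at a time, so that everything reduces to a single one-step construction. Suppose $(R',\fm')$ is a local ring, flat over $R_0$, with $\fm_0 R' = \fm'$ and residue field $k'$, and let $\theta\in L$; I want to produce a local $R_0$-flat ring whose residue field is $k'(\theta)$. If $\theta$ is transcendental over $k'$, I would take $R'' = R'[T]_{\fm' R'[T]}$: the ideal $\fm' R'[T]$ is prime because $R'[T]/\fm' R'[T]=k'[T]$ is a domain, so $R''$ is local with maximal ideal $\fm' R''$ and residue field $\operatorname{Frac}(k'[T])=k'(T)$, and it is flat over $R_0$, being a localization of a polynomial ring over $R'$. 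If $\theta$ is algebraic over $k'$, I would take its monic minimal polynomial $\bar f\in k'[T]$, lift it to a monic $f\in R'[T]$ of the same degree, and set $R'' = R'[T]/(f)$; this is a finite free, hence flat, $R'$-algebra with $R''/\fm' R''=k'[T]/(\bar f)$ a field, and since $R'\hookrightarrow R''$ is integral, every maximal ideal of $R''$ lies over $\fm'$, while the closed fibre $R''/\fm' R''$ has a single prime, so $R''$ is local with maximal ideal $\fm' R''$ and residue field $k'(\theta)$. In either case $R'\to R''$ is a flat local homomorphism with $\fm' R'' = \fm_{R''}$.

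Granting this, I would well-order $L$ as $(x_\alpha)_{\alpha<\lambda}$ and build, by transfinite recursion, an ascending chain of intermediate fields $k_\alpha\subseteq L$ with $k_0 = R_0/\fm_0$, $k_{\alpha+1}=k_\alpha(x_\alpha)$, and $k_\mu=\bigcup_{\alpha<\mu}k_\alpha$ at limit ordinals $\mu$, so that $k_\lambda=L$; alongside, local rings $(R_\alpha,\fm_\alpha)$ flat over $R_0$ with $\fm_0 R_\alpha=\fm_\alpha$ and $R_\alpha/\fm_\alpha=k_\alpha$, and flat, local transition maps $R_\alpha\to R_\beta$. Successor stages are handled by the one-step construction (with $R_{\alpha+1}=R_\alpha$ when $x_\alpha$ already lies in $k_\alpha$), and at a limit ordinal $\mu$ I would put $R_\mu=\varinjlim_{\alpha<\mu}R_\alpha$. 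Two standard facts then apply: a filtered colimit of $R_0$-flat modules is $R_0$-flat, and a filtered colimit of local rings along local homomorphisms is local, with maximal ideal the colimit of the maximal ideals; hence $R_\mu$ is local, flat over $R_0$, with $\fm_0 R_\mu=\fm_{R_\mu}$ and residue field $\varinjlim k_\alpha=k_\mu$. Taking $R=R_\lambda$ completes the construction.

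No step here is deep. The only points that require a little attention are the verification in the algebraic case that $R'[T]/(f)$ is genuinely local --- which comes from ``lying over'' for the integral extension $R'\hookrightarrow R''$ together with the fact that the closed fibre is a single reduced point --- and the bookkeeping at limit ordinals, so that the residue field grows by exactly one generator at each successor stage and the colimit closes it up to all of $L$. I would regard the latter organizational point as the main, though entirely routine, obstacle. I would also note that $R$ need not be Noetherian when $L/(R_0/\fm_0)$ is infinitely generated --- the statement does not claim otherwise --- whereas when $L/(R_0/\fm_0)$ is finitely generated only finitely many steps are needed and $R$ is Noetherian.
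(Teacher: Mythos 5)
The paper gives no proof of this; it is a citation of EGA $0_{\mathrm{III}}$, 10.3.1. Your construction is the standard one (essentially Grothendieck's argument, as reproduced in Bourbaki and the Stacks Project): a one-step transcendental/algebraic extension followed by transfinite recursion with filtered colimits at limit ordinals, and every step you give is correct. The one point worth pressing is your own caveat about Noetherianity, because it does matter for this paper: the EGA result actually asserts that $R$ can be taken \emph{Noetherian}, and the paper silently relies on this downstream --- in the proof of Corollary \ref{rel_cohen} the ring produced by this proposition is immediately declared ``(necessarily regular)'', and in Lemma \ref{gen_cohen}(a) it is replaced by its $\fm$-adic completion; both presume Noetherianity. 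Your colimit $R$ need not be Noetherian when $L$ is infinitely generated over $k_0 = R_0/\fm_0$. The standard fix is to pass to the $\fm$-adic completion $\hat R$: since $\fm = \fm_0 R$ and $R$ is $R_0$-flat, one has $\gr_\fm R \cong \gr_{\fm_0} R_0 \otimes_{k_0} L$, a finitely generated $L$-algebra, hence Noetherian, and a separated complete filtered ring with Noetherian associated graded is Noetherian. The same identification of graded rings, together with the local flatness criterion, shows $\hat R$ is still flat over $R_0$ with $\fm_0\hat R$ maximal and residue field $L$. Adding this coda would upgrade your proof from the literal statement quoted to the Noetherian form the paper actually uses.
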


\begin{lem}\label{gen_cohen}Let $(R,\fm) \to (A,\fn)$ be a flat, local morphism of local rings and that the induced map on residue fields is separable.
\begin{itemize}
\item[(a)] The standard map $R \to \hat{A}$ factors through a complete local ring $(R',\fm')$, which is flat over $R$ and satisfies $R'/\fm R' = R'/\fm' = A/\fn$.
\item[(b)] If $R$, $A$, and $A/\fm A$ are regular, then $\hat{A} \cong R'[[X_1, \cdots, X_d]]$ and $d = \dim(A/\fm A)$.
\end{itemize}
\end{lem}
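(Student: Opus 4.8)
The plan is to produce the coefficient ring $R'$ abstractly from Proposition~\ref{extension} and then realize a copy of it inside $\hat A$ by successively lifting along the truncations $\hat A/\fn^{n}\hat A$; the separability of the residue extension enters precisely to make $R\to R'$ formally smooth, which is what powers the lifting.

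\emph{Part (a).} I would apply Proposition~\ref{extension} with $R_{0}:=R$ and $L:=A/\fn$ to obtain a local ring flat over $R$ whose maximal ideal is $\fm R'$ and whose residue field is $A/\fn$; replacing it by its completion (which changes neither flatness over $R$, nor the equality $\fm R'=\fm'$, nor the residue field, since $A/\fn$ is a field) we may take this ring $R'$ to be complete. The crucial point is that $R\to R'$ is \emph{formally smooth}: it is flat with closed fibre $R'/\fm R'=A/\fn$, a separable---hence formally smooth---extension of $R/\fm$, and flatness together with a formally smooth closed fibre forces formal smoothness of the whole map (see e.g.\ \cite[\S28]{Matsumura}). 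Since $\hat A=\varprojlim_{n}\hat A/\fn^{n}\hat A$ and each $\hat A/\fn^{n+1}\hat A\to\hat A/\fn^{n}\hat A$ is a surjection with nilpotent kernel, formal smoothness of $R\to R'$ lets us lift the quotient map $R'\to R'/\fm'=A/\fn=\hat A/\fn\hat A$, step by step and compatibly with the structure map $R\to\hat A$, to local homomorphisms $R'\to\hat A/\fn^{n}\hat A$; passing to the limit these assemble into a local $R$-algebra homomorphism $R'\to\hat A$ inducing the identity on residue fields, which is the asserted factorization.

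\emph{Part (b).} Assume in addition that $R$, $A$ and $A/\fm A$ are regular, put $d:=\dim(A/\fm A)$ and $L:=A/\fn$, and choose $x_{1},\dots,x_{d}\in\fn$ whose images in $A/\fm A$ form a regular system of parameters. Combined with the map from (a) these give a local $R$-algebra homomorphism $u\colon R'[[X_{1},\dots,X_{d}]]\to\hat A$ with $X_{i}\mapsto x_{i}$, and it remains to show $u$ is an isomorphism. Reducing modulo $\fm$: the source becomes $(R'/\fm R')[[X_{1},\dots,X_{d}]]=L[[X_{1},\dots,X_{d}]]$ because $\fm R'=\fm'$, while the target becomes $\widehat{A/\fm A}$, an equicharacteristic complete regular local ring of dimension $d$ with residue field $L$; the coefficient field $L\hookrightarrow\widehat{A/\fm A}$ coming from (a), together with the images $\bar x_{1},\dots,\bar x_{d}$, exhibits $\widehat{A/\fm A}$ as $L[[\bar x_{1},\dots,\bar x_{d}]]$ via Cohen's structure theorem, so $u\bmod\fm$ is an isomorphism. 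To propagate this I would use that $\hat A$ and $R'[[X_{1},\dots,X_{d}]]$ are both flat over $R$, so that $\gr_{\fm}$ of a flat $R$-algebra $S$ is $\gr_{\fm}R\otimes_{R/\fm}(S/\fm S)$, whence $\gr_{\fm}(u)$ is an isomorphism. Surjectivity of $u$ then follows from the standard complete-local Nakayama argument, once one observes that $u$ sends the maximal ideal of $R'[[X_{1},\dots,X_{d}]]$ into an ideal generating $\fm\hat A+(x_{1},\dots,x_{d})\hat A=\fn\hat A$ (the quotient being the field $L$); injectivity follows from $\fm$-adic separatedness of $R'[[X_{1},\dots,X_{d}]]$ together with injectivity of $\gr_{\fm}(u)$. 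This yields $\hat A\cong R'[[X_{1},\dots,X_{d}]]$ with $d=\dim(A/\fm A)$.

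\emph{Main obstacle.} The heart of the argument is part (a): converting the abstract coefficient ring $R'$ into an honest subring of $\hat A$. This is exactly where separability is indispensable---it is what makes $R'/\fm R'$, and hence $R\to R'$, formally smooth---and the only genuine care is needed in checking that the successive lifts can be chosen compatibly and remain local, so that they glue to a bona fide local $R$-algebra map in the limit. The work in (b) is routine bookkeeping; the one mild subtlety there is that $\hat A$ need not be $\fm$-adically complete, which is why one phrases everything in terms of the $\fn$-adic topology and the identity $\fm\hat A+(x_{1},\dots,x_{d})\hat A=\fn\hat A$.
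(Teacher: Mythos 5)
Your part~(a) is essentially the paper's argument: construct $R'$ from Proposition~\ref{extension}, complete it, observe that flatness plus a separable (hence $0$-smooth) closed fibre makes $R\to R'$ formally ($\fm R'$-)smooth by \cite[28.10]{Matsumura}, and then lift the map $R'\to A/\fn$ inductively up the tower $A/\fn^{j}$ to obtain $R'\to\hat A$. No difference there.

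In part~(b) you take a genuinely different, and longer, route than the paper does. Having defined $u\colon R'[[X_1,\dots,X_d]]\to\hat A$ the same way (sending $X_i$ to lifts of a regular system of parameters of $A/\fm A$), the paper finishes in two lines: $\fn\hat A=\fm\hat A+(x_1,\dots,x_d)$ together with the residue-field isomorphism and the complete-local Nakayama lemma gives surjectivity; then, since $R'[[X_1,\dots,X_d]]$ is a regular local (hence integral) domain of the same dimension as $\hat A$ (both equal $\dim R + d$ by the flat-fibre dimension formula), any nonzero kernel would strictly drop dimension, so $u$ is injective too. You instead reduce mod $\fm$, invoke Cohen's structure theorem to identify $u\bmod\fm$ with an isomorphism $L[[X_1,\dots,X_d]]\xrightarrow{\sim}\widehat{A/\fm A}$, use $R$-flatness of both source and target to see that $\gr_\fm$ of each is $\gr_\fm R\otimes_{R/\fm}(-/\fm(-))$ so that $\gr_\fm(u)$ is an isomorphism, and then pull back injectivity and surjectivity via separatedness and Nakayama. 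Your argument is correct, and the associated-graded reduction is perhaps more transparent about \emph{why} $u$ is an isomorphism, but it is appreciably heavier machinery than the paper's dimension count. One small aside: the ``subtlety'' you flag about $\hat A$ not being $\fm$-adically complete is a non-issue --- a Noetherian local ring complete for its maximal-ideal-adic topology is automatically $I$-adically complete for every ideal $I$ --- though since your argument only uses $\fm$-adic separatedness of the \emph{source}, nothing hinges on it.
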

\begin{proof}By Proposition \ref{extension}, we can find a flat, local extension $R'$ of $R$ such that $R'/\fm R' = A/\fn$. Replace $R'$ by its completion if necessary. As $R \to R'$ is flat and $R/\fm \to R'/\fm R' = A/\fn$ is separable ($0$-smooth), $R \to R'$ is $\fm R'$-smooth by \cite[28.10]{Matsumura}.

We claim that $R \to \hat{A}$ factors through $R'$. $R' \to A/\fn$ is obtained by quotienting by the maximal ideal. If we inductively assume that a factorization $R \to R' \to A/\fn^j$ has been constructed, the definition of formal smoothness (cf. \cite[\S 28]{Matsumura} guarantees a lift:
\[
\xymatrix{
R' \ar[r] \ar@{.>}[rd] & A/\fn^j \\
R \ar[u] \ar[r] & A/{\fn}^{j+1} \ar[u]
}
\]
Since $\ds \hat{A} = \lim_{\stackrel{\longleftarrow}{j}}{A/\fn^j}$, we obtain a map $R' \to \hat{A}$ that extends $R \to \hat{A}$, and (a) is proved.

For (b), suppose that the maximal ideal of $A/\fm A$ is generated by the regular sequence $\overline{x_1}, \cdots, \overline{x_d}$. Let $x_i \in A$ be lifts of $\overline{x_i}$.  We now define $\phi: R'[[X_1, \cdots, X_d]] \to \hat{A}$ via $X_i \mapsto x_i$. $R'[[X_1, \cdots, X_d]]$ and $\hat{A}$ are regular local rings of the same dimension and have the same residue field. Since $\fn \hat{A} = \fm \hat{A} + (x_1, \cdots, x_d)$, $\phi$ is surjective, and its kernel must be trivial for dimensional reasons.
  \end{proof}

An essentially smooth morphism of local rings can very often fail to be separable at the level of residue fields. Take, for example, $\mathbb{F}_2(T) \to \mathbb{F}_2(T)[X]_{(X^2-T)}$. In these situations, we can first enlarge the residue field of the base to its perfect closure to ``force'' separability and put us into a situation where Lemma \ref{gen_cohen} may be applied.

\begin{lem}\label{tpfields}Let $k$ be a field and suppose that $E/k$ is a finitely-generated field extension, and $L/k$ is algebraic. Then $E \otimes_k L$ is an Artinian ring.
\end{lem}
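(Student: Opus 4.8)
The plan is to exhibit $E \otimes_k L$ as a module-finite algebra over a Noetherian ring and to observe separately that it has Krull dimension zero; a Noetherian ring of dimension zero is Artinian. The point to keep in mind is that $L/k$ need not be finite, so a priori $E \otimes_k L$ is only a filtered colimit of the Artinian algebras $E \otimes_k L'$ taken over finite subextensions $L'/k$; since a colimit of Artinian rings need not be Artinian, the finite generation of $E$ over $k$ has to enter the argument in an essential way.

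First I would record the zero-dimensionality, which uses only that $L/k$ is algebraic. As $L$ is integral over $k$, the ring $E \otimes_k L$ is integral over $E \otimes_k k = E$. Any ring integral over a field is zero-dimensional: for any prime $\fq$, the map $E \to (E \otimes_k L)/\fq$ is injective (it is a nonzero ring map out of a field), so $(E \otimes_k L)/\fq$ is a domain integral over the field $E$, hence itself a field, and $\fq$ is maximal. Thus $\dim(E \otimes_k L) = 0$.

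Next I would produce the Noetherian base ring, which is where finite generation of $E/k$ is used. Pick a transcendence basis $t_1, \dots, t_n$ of $E/k$ and set $F = k(t_1, \dots, t_n)$; then $E/F$ is algebraic and finitely generated, hence finite, so $E \otimes_k L$ is a finite module over $F \otimes_k L$. Now $F$ is the localization $S^{-1}k[t_1, \dots, t_n]$ at $S = k[t_1, \dots, t_n] \setminus \{0\}$, and since tensor products commute with localization, $F \otimes_k L \cong S^{-1}\bigl(L[t_1, \dots, t_n]\bigr)$, a localization of the Noetherian ring $L[t_1, \dots, t_n]$, hence Noetherian. Therefore $E \otimes_k L$ is module-finite over a Noetherian ring, hence Noetherian; combined with the previous paragraph it is Noetherian of dimension zero, i.e. Artinian.

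There is no serious obstacle here — the argument is essentially a bookkeeping exercise. The only subtlety is the order of operations: one cannot read off Artinianness from the presentation of $E \otimes_k L$ as a colimit over finite subextensions of $L$, so the argument must instead route Noetherianness through $F \otimes_k L = S^{-1}(L[t_1,\dots,t_n])$ and get dimension zero from integrality over $E$.
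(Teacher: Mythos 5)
Your argument is correct and follows the same strategy as the paper: show $E\otimes_k L$ is Noetherian and of Krull dimension zero (via integrality over $E$), then conclude Artinianness. The paper gets Noetherianness in one line by noting that $L \to E\otimes_k L$ is essentially of finite type (base change of $k\to E$) over the field $L$; your transcendence-basis factorization through $S^{-1}L[t_1,\dots,t_n]$ is just a hands-on unpacking of that same fact.
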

\begin{proof}Since $k \to E$ is essentially of finite type, so too is $L \to E \otimes_k L$; hence $E \otimes_k L$ is Noetherian. Since $k \to L$ is algebraic, $E \to E \otimes_k L$ will be an integral extension, which forces $\dim E \otimes_k L = \dim E = 0$.
  \end{proof}

\begin{cor}\label{rel_cohen}Let $(R_0,\fm_0)$ be a regular local ring and suppose that $(R_0,\fm_0) \to (A_0, \fn_0)$ is an essentially smooth local morphism (i.e. the localization of a smooth, finitely-generated $R_0$-algebra). Then there exist flat extensions $(R,\fm)$ of $(R_0, \fm_0)$, $(A,\fn)$ of $(A_0, \fn_0)$, and an induced map $R \to A$ such that
\begin{enumerate}
\item $\fm_0 R = \fm$.
\item $A$ is essentially smooth over $R$.
\item $\fn_0 A$ is $\fn$-primary in $A$.
\item $\hat{A}$ is a power series ring over a complete regular ring $R'$ of dimension equal to $\dim R_0$.
\end{enumerate}
\end{cor}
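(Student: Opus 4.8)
The plan is to eliminate the possible inseparability of the residue-field extension $A_0/\fn_0$ over $R_0/\fm_0$ by first base-changing $R_0$ along a flat local extension whose residue field is \emph{perfect}, and then to read off the four conclusions, the last one from Lemma \ref{gen_cohen}(b).

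First I would set $k_0 = R_0/\fm_0$ and let $L$ be the perfect closure of $k_0$ (so $L = k_0$ in characteristic zero). By Proposition \ref{extension} there is a flat local extension $(R,\fm)$ of $R_0$ with $\fm_0 R = \fm$ and $R/\fm = L$; this is conclusion $(1)$. Since $R_0$ is regular and the closed fibre $R/\fm_0 R = L$ is a field, $R$ is regular, and $\dim R = \dim R_0 + \dim(R/\fm_0 R) = \dim R_0$.

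Next I would form $A_0 \otimes_{R_0} R$. As a base change of the essentially-smooth map $R_0 \to A_0$ it is essentially smooth over $R$, and as a base change of the flat map $R_0 \to R$ it is flat over $A_0$. The fibre over $\fn_0 \in \Spec A_0$ is $\Spec\bigl((A_0/\fn_0)\otimes_{k_0} L\bigr)$, which is Artinian by Lemma \ref{tpfields}. Choose a maximal ideal $\fn$ of $A_0 \otimes_{R_0} R$ containing $\fn_0(A_0\otimes_{R_0} R)$ and put $A = (A_0\otimes_{R_0} R)_{\fn}$. A routine check of contractions shows $\fn\cap R = \fm$ and $\fn\cap A_0 = \fn_0$, so $R\to A$ and $A_0\to A$ are local; thus $A$ is a flat local extension of $A_0$ that is essentially smooth over $R$ (conclusion $(2)$), and $A/\fn_0 A$, a localization of the Artinian ring $(A_0/\fn_0)\otimes_{k_0} L$, is Artinian, so $\fn_0 A$ is $\fn$-primary (conclusion $(3)$).

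Finally, conclusion $(4)$ follows by applying Lemma \ref{gen_cohen}(b) to $R\to A$: this map is flat and local, $R$ is regular, $A$ is regular (essentially smooth over the regular ring $R$), and $A/\fm A$ is regular (essentially smooth over the field $L$). Moreover $A/\fn$ is a field that is finitely generated as an $L$-algebra, hence finite over $L$ by Zariski's lemma, and since $L$ is perfect this extension is separable. Lemma \ref{gen_cohen}(b) then gives $\hat A \cong R'[[X_1,\dots,X_d]]$ with $R'$ complete, flat over $R$ with $\fm R' = \fm'$; since its closed fibre $R'/\fm R' = A/\fn$ is a field, $R'$ is regular and $\dim R' = \dim R = \dim R_0$, as required. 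The one place real care is needed is the passage to the perfect closure: one must know such an $R$ exists as a flat local extension with $\fm_0 R = \fm$ (Proposition \ref{extension}) and that the preimage of $\fn_0$ remains $\fn$-primary after this enlargement, which is exactly what Lemma \ref{tpfields} supplies; the rest is formal bookkeeping with base change of essentially-smooth morphisms.
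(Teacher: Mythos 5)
Your approach is the same as the paper's: use Proposition \ref{extension} to produce a flat local extension $R$ of $R_0$ with perfect residue field $L$ and $\fm_0 R = \fm$, base change $A_0$ to $R$ and localize at a prime lying over $\fn_0$, invoke Lemma \ref{tpfields} to get that $\fn_0 A$ is $\fn$-primary, and then apply Lemma \ref{gen_cohen}(b) to the now-separable residue extension. The lemmas you cite are exactly the ones the paper uses, and the structure of the argument matches.

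One step of your justification is wrong, though the conclusion it feeds into is still correct. You claim that $A/\fn$ is finitely generated as an $L$-algebra and hence finite over $L$ by Zariski's lemma. This is false in general: $A_0/\fn_0$ is a finitely generated \emph{field extension} of $k_0$, but need not be a finitely generated $k_0$-\emph{algebra}. For example, take $A_0 = R_0[X]_{\fm_0 R_0[X]}$, which is essentially smooth over $R_0$ with residue field $k_0(X)$, an infinite extension of $k_0$; the field $A/\fn$ is then an algebraic extension of $k_0(X) \otimes_{k_0} L$, also infinite over $L$. The finiteness detour is unnecessary anyway: what Lemma \ref{gen_cohen} requires is that the residue field extension be separable in the sense of being $0$-smooth (separably generated), and \emph{every} field extension of a perfect field is separable in this sense, regardless of finiteness. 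The paper phrases this simply as ``Since $R$ has a perfect residue field, we can apply Lemma \ref{gen_cohen}.'' Replace your appeal to Zariski's lemma with that observation and the proof is sound.
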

\begin{proof}Let $L$ be the perfect closure of the field $k = R_0/\fm_0$. By Proposition \ref{extension}, there is a (necessarily regular) ring $(R,\fm)$ which is flat over $R$, has residue field $L$ and satisfies condition (1). The ring $A_0 \otimes_{R_0} R$ is essentially smooth over $R$ and hence regular. Since $\Spec(A_0 \otimes_{R_0} R) \to \Spec A_0$ is faithfully flat, we can find a prime ideal $\fp$ that maps to $\fn_0 \in \Spec A_0$. Setting $A = (A_0 \otimes_{R_0} R)_{\fp}$, we see that $A$ clearly satisfies (2). Next observe that by assumption, $E = A_0/\fn_0$ is a finitely-generated field extension of $k$. To prove (3), we note that $A/\fn_0 A$ is just the localization at $\fp$ of
\[  (A_0 \otimes_{R_0} R)/\fn_0(A_0 \otimes_{R_0} R) \cong (A_0 \otimes_{R_0} R) \otimes_{A_0} A_0/\fn_0 \cong E \otimes_{R_0} R \cong E \otimes_k L, \]
which by Lemma \ref{tpfields} is Artinian. Since $R$ has a perfect residue field, we can apply Lemma \ref{gen_cohen} and obtain (4).
  \end{proof}

\begin{mcor}Let $f:X \to Y$ be a smooth morphism of schemes with $Y$ regular of dimension at most $2$. Then every local ring $\mco_{X,x}$ satisfies Serre's Positivity Conjecture.
\end{mcor}
\begin{proof}If $x$ maps to $y \in Y$, then $\mco_{X,x}$ is essentially smooth over $\mco_{Y,y}$. By Corollary \ref{rel_cohen}, there is a flat local extension $(A,\fn)$ of $\mco_{X,x}$ inside of which the maximal ideal of $\mco_{X,x}$ is $\fn$-primary. Furthermore, $\hat{A}$ is a power series ring over a complete regular local ring $R'$ of dimension at most $2$. The claim now follows from Lemma \ref{triviality} and Theorem \ref{main}.
  \end{proof}

\section{Blowups and Transversality}
\subsection{The Locus of Ramification}For generalities concerning blowups, we refer the reader to \cite[Appx. B.6]{Fulton} or \cite[II.7]{Hartshorne}. In what follows, we shall confine ourselves to the following setting: Let $(A,\fn)$ be a regular local ring and let $X = \Spec A$. The blowup $\widetilde{X}$ of $X$ along $\fn$ may be realized via $\widetilde{X} = \Proj(A[\fn T])$ where $A[\fn T]$ is the Rees Algebra $\bigoplus_{i = 0}^{\infty}{\fn^i T^i}$. From the canonical morphism $\phi: \widetilde{X} \to X$, we have the exceptional divisor $E = \phi^{-1}(\Spec A/\fn) = \Proj(\gr_{\fn} A)$, which is just a projective space over the residue field having dimension one less than $X$. Fix $f \in \fn-\fn^2$ and consider $\overline{f} \in \fn/\fn^2$. We define the closed subscheme $V(\overline{f})$ of $E$ via the surjection $\ds \gr_{\fn} A \twoheadrightarrow \frac{\gr_{\fn} A}{\overline{f}\gr_{\fn} A}$. 

Each element $g \in \fn-\fn^2$ defines an affine patch $D_{+}(\bg)$ of $\widetilde{X}$ whose coordinate ring is $(A[\fn T])_{(\bg)}$, the degree-zero part of the $\mathbb{Z}$-graded ring $(A[\fn T])_{\bg}$ (cf. \cite[II.2]{Hartshorne}). Here, we write a bold $\bg$ for the degree-one element in $A[\fn T]$. If $\fn = (t_1, \cdots, t_d)$, then $D_{+}(\bg) = \Spec A[t_1/g, t_2/g, \cdots, t_d/g]$. We now give a local description of $V(\overline{f})$ inside of $\widetilde{X}$:

\begin{lem}\label{local_locus}With the above notations, $V(\overline{f}) \cap D_{+}(\bg)$ is defined by the ideal $(g,f/g)$ inside of $D_{+}(\bg)$.
\end{lem}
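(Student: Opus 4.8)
The plan is to pass to the associated graded ring and recognize $E\cap D_+(\bg)$ as a distinguished affine patch of $\Proj(\gr_\fn A)$, then track $f/g$ through that identification. First I would record the standard descriptions of the coordinate rings involved. Writing $\fn=(t_1,\ldots,t_d)$, the coordinate ring of $D_+(\bg)$ is $A[t_1/g,\ldots,t_d/g]=(A[\fn T])_{(\bg)}$, and since $t_i=g\cdot(t_i/g)$ in this ring, the extended ideal $\fn\cdot A[t_1/g,\ldots,t_d/g]$ is principal, generated by $g$. As $E=\widetilde X\times_X\Spec(A/\fn)$, this gives $E\cap D_+(\bg)=V(g)$ with coordinate ring $A[t_1/g,\ldots,t_d/g]/(g)$. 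Using the canonical isomorphism $A[\fn T]/\fn A[\fn T]\cong\gr_\fn A$ (matching $\fn^iT^i$ modulo $\fn^{i+1}T^i$ with the degree-$i$ piece $\fn^i/\fn^{i+1}$ of $\gr_\fn A$), I would identify this quotient with $(\gr_\fn A)_{(\overline g)}$, the degree-zero part of the localization of $\gr_\fn A$ at the leading form $\overline g\in\fn/\fn^2$; that is, $E\cap D_+(\bg)$ is exactly the patch $D_+(\overline g)$ of $E=\Proj(\gr_\fn A)$, with the class of $t_i/g$ corresponding to $\overline{t_i}/\overline g$.

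Next I would locate the element $f/g$ under this identification. The degree-one element $fT\in A[\fn T]$ produces a well-defined degree-zero element $(fT)/(gT)\in(A[\fn T])_{(\bg)}$, namely $f/g$; writing $f=\sum_i a_it_i$ with $a_i\in A$ exhibits it concretely as $\sum_i a_i(t_i/g)\in A[t_1/g,\ldots,t_d/g]$. Reducing modulo $g$ (equivalently, modulo $\fn$) sends $f/g$ to $\sum_i\overline{a_i}\,(\overline{t_i}/\overline g)=\overline f/\overline g$, the degree-zero element of $(\gr_\fn A)_{\overline g}$ attached to the leading form $\overline f\in(\gr_\fn A)_1$; this is independent of the chosen expression for $f$ because $\overline f/\overline g$ is the reduction of the intrinsic element $(fT)/(gT)$.

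Now $V(\overline f)\subseteq E$ is defined by the homogeneous ideal $\overline f\,\gr_\fn A$, whose localization at $\overline g$ has degree-zero part $(\overline f/\overline g)(\gr_\fn A)_{(\overline g)}$; so inside the patch $E\cap D_+(\bg)=D_+(\overline g)$ the closed subscheme $V(\overline f)$ is cut out by the single element $\overline f/\overline g$, which by the previous step is the image of $f/g$. Finally, $V(\overline f)$ meets $D_+(\bg)$ as the composite of closed immersions $V(\overline f)\hookrightarrow E\cap D_+(\bg)=V(g)\hookrightarrow D_+(\bg)$, and the ideal of a composite of closed immersions is the preimage of the smaller ideal; hence the defining ideal of $V(\overline f)\cap D_+(\bg)$ in $A[t_1/g,\ldots,t_d/g]$ is the preimage of $(\overline f/\overline g)$ under reduction modulo $g$, which is precisely $(g,f/g)$.

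The only place where care is genuinely needed is the compatibility asserted in the first paragraph: that $A[\fn T]/\fn A[\fn T]\cong\gr_\fn A$ is compatible with localizing at $\bg$ (respectively $\overline g$), so that $E\cap D_+(\bg)$ is literally the patch $D_+(\overline g)$ of $\Proj(\gr_\fn A)$, together with the bookkeeping that $(fT)/(gT)$ reduces to $\overline f/\overline g$. Everything after that is a formal manipulation of ideals, and in particular the regularity of $A$ plays no role in this lemma.
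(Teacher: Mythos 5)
Your proof is correct and follows the same overall structure as the paper's: identify $E\cap D_+(\bg)$ with the distinguished patch $D_+(\overline g)$ of $\Proj(\gr_\fn A)$, show that the ideal of $V(\overline f)$ on that patch is generated by $\overline f/\overline g$, and then pull back along $A[t_1/g,\ldots,t_d/g]\twoheadrightarrow A[t_1/g,\ldots,t_d/g]/(g)$ to get $(g,f/g)$. Where you genuinely diverge is in the key middle step. The paper shows that $\overline f/\overline g$ generates the kernel of $(\gr_\fn A)_{(\overline g)}\twoheadrightarrow (\gr_\fn A/\overline f\gr_\fn A)_{(\overline g)}$ by clearing denominators, deducing that $\overline f$ divides $\overline g^m\overline h$, and then invoking the primality of the linear form $\overline f$ in the polynomial ring $\gr_\fn A$ (which is where regularity of $A$ enters). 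You instead invoke the standard identity that the degree-zero part of the localization of a homogeneous ideal $(a)_{(\overline g)}$, for $a$ of degree one, is the principal ideal $(a/\overline g)$ of $S_{(\overline g)}$, via the factorization $a\,h/\overline g^{\,k}=(a/\overline g)\,(h/\overline g^{\,k-1})$. That argument is more elementary and, as you note, makes no use of unique factorization or of $\gr_\fn A$ being a polynomial ring; so your remark that regularity plays no essential role in this lemma is accurate, at least for that step (you still implicitly use that $A$ is a domain so that $g$ and $gT$ are non-zerodivisors when forming $A[t_1/g,\ldots,t_d/g]$ inside $A_g$). It would sharpen the write-up to actually display the one-line factorization rather than cite it as a standard fact, since that is precisely the step where the paper resorts to a heavier tool; but the proof as given is sound.
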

\begin{proof}On $D_{+}(\bg)$, it is clear from the construction that $g$ generates the exceptional divisor $E$. Since $V(\overline{f}) \hookrightarrow E$, it suffices to show the ideal of the closed immersion $V(\overline{f}) \cap D_{+}(\bg) \hookrightarrow E \cap D_{+}(\bg)$ is generated by (residue of) $f/g$. Consider the surjection of graded rings
\[ \gr_\fn A \twoheadrightarrow \frac{\gr_\fn A}{\overline{f} \gr_\fn A} \]
Let $\overline{g}$ be the degree-one element in $\fn/\fn^2$. Restricting to $E \cap D_{+}(\bg) = D_{+}(\overline{\bg})$ amounts to looking at the surjection
\[ (\gr_{\fn} A)_{(\overline{g})} \twoheadrightarrow \left( \frac{\gr_\fn A}{\overline{f} \gr_\fn A} \right)_{(\overline{g})} \]
It's clear that $\overline{f}/\overline{g}$ maps to $0$; we claim that it, in fact, generates the kernel. Suppose $\overline{h}/\overline{g}^k \in (\gr_{\fn} A)_{(\overline{g})}$ maps to $0$. This means that in $\gr_{\fn} A$, $\overline{f}$ divides $\overline{g}^m \overline{h}$ for some $m \in \mathbb{N}$. Since $\gr_{\fn} A$ is just a polynomial ring over a field, the element $\overline{f}$ is prime. If $\overline{f}$ divides $\overline{g}$, then $\overline{f}/\overline{g}$ is a unit, meaning that $D_{+}(\bg) \cap V(\overline{f}) = \emptyset$. Otherwise, $\overline{f}$ divides $\overline{h}$, meaning that $\overline{f}/\overline{g}$ divides $\overline{h}/\overline{g}$ as claimed.
  \end{proof}
From this lemma, we can compute the ``ramification locus'' of $f$ in $\widetilde{X}$, that is, the collection of points $x \in \widetilde{X}$ for which $f$ lies in the square of maximal ideal of $\mco_{\widetilde{X},x}$.

\begin{cor}\label{ramification_locus}For all $x \in \widetilde{X}$, $f$ lies in the square of the maximal ideal of $\mco_{\widetilde{X},x}$ if and only if $x \in V(\overline{f})$.
\end{cor}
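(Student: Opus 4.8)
The plan is to localize the statement on the standard affine charts of $\widetilde{X}$ and deduce it from Lemma~\ref{local_locus}. Choose a minimal set of generators $t_1,\dots,t_d$ of $\fn$ (so each $t_i\in\fn-\fn^2$); then $\widetilde{X}$ is covered by the charts $D_+(\mathbf{t_i})$, and a given $x\in\widetilde{X}$ lies in some $D_+(\bg)$ with $g\in\fn-\fn^2$. Write $B$ for the coordinate ring $(A[\fn T])_{(\bg)}$ of this chart and $\fq\subset B$ for the prime corresponding to $x$, so that $\mco_{\widetilde{X},x}=B_{\fq}$ with maximal ideal $\fm_x:=\fq B_{\fq}$. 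The elementary observation driving everything is that, since $f\in\fn$, the element $f/g:=fT/\bg$ genuinely lies in $B$ and satisfies $f=g\cdot(f/g)$; and by Lemma~\ref{local_locus}, $x\in V(\bar f)$ if and only if $(g,f/g)\subseteq\fq$.

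Given this, the ``if'' direction is immediate: $g,\,f/g\in\fq\subseteq\fm_x$ forces $f=g(f/g)\in\fm_x^2$. For the converse I would argue the contrapositive, using throughout the standard criterion that \emph{if $(S,\fm_S)$ is Noetherian local and $h\in\fm_S$ with $S/hS$ regular of dimension $\dim S-1$, then $h\notin\fm_S^2$} (proved by comparing $\dim_k \fm_S/\fm_S^2$ with $\dim_k \fm_{S/hS}/\fm_{S/hS}^2+1$). Split according to whether $g\in\fq$, i.e. whether $x$ lies on the exceptional divisor $E$. If $g\notin\fq$, then $x\notin E$, and since the blowup is an isomorphism over the punctured spectrum we get $\mco_{\widetilde{X},x}\cong A_{\fp}$, with $\fp$ the prime of $A$ under $x$, compatibly sending $f$ to $f$; because $f\in\fn-\fn^2$ and $A$ is regular, $f$ is part of a regular system of parameters, so $A/fA$ is regular of dimension $\dim A-1$, and since $f$ is then a nonzerodivisor with $A_{\fp}/fA_{\fp}=(A/fA)_{\fp}$ regular, the criterion gives $f\notin\fp^2A_{\fp}=\fm_x^2$ (trivially so if $f\notin\fp$). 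If instead $g\in\fq$, then $x\in E$, so $x\notin V(\bar f)$ forces $f/g\notin\fq$ by Lemma~\ref{local_locus}; thus $f/g$ is a unit in $B_{\fq}$ and $f\notin\fm_x^2$ reduces to $g\notin\fm_x^2$. Here $B_{\fq}/gB_{\fq}=\mco_{E,x}$ is a localization of $(\gr_{\fn}A)_{(\bar g)}$, which is regular since $A$ regular makes $\gr_{\fn}A$ a polynomial ring over the field $A/\fn$; as $B_{\fq}$ is a domain and $g\neq 0$, $g$ is a nonzerodivisor, so the criterion applies with $S=B_{\fq}$ and $h=g$.

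I do not anticipate a genuine obstacle here: the real content is Lemma~\ref{local_locus} together with the regular-quotient criterion. The only care needed is bookkeeping — keeping the chart-dependent data $g$ and $f/g$ consistent — and correctly citing the structural inputs: that $\gr_{\fn}A$ is a polynomial ring over $A/\fn$, that on $D_+(\bg)$ the exceptional divisor is cut out by $g$ with reduced fibre $\Spec((\gr_{\fn}A)_{(\bar g)})$ (both recorded in the proof of Lemma~\ref{local_locus}), and that blowing up a regular local ring along its maximal ideal is an isomorphism over the punctured spectrum.
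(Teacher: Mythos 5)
Your proposal is correct and follows essentially the same route as the paper: reduce to the affine chart $D_+(\bg)$, invoke Lemma~\ref{local_locus} to identify $V(\overline{f})$ with $V(g,f/g)$, observe $f=g\cdot(f/g)$ for the forward direction, and for the converse use that a regular quotient by a nonzerodivisor detects membership in $\fm^2$ (on $E$ via $\mco_{E,x}$, off $E$ via the fact that $f\in\fn-\fn^2$ makes $A/fA$ regular). The only cosmetic difference is that you spell out the regular-quotient criterion explicitly where the paper cites it implicitly (``of course'' for $\fp\neq\fn$, and ``both are regular'' on $E$).
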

\begin{proof}Of course, for any $\fp \in \Spec A$, we have $f \notin (\fp A_{\fp})^2$. Since $\widetilde{X} - E$ is isomorphic to the punctured spectrum $X - \left\{\fn\right\}$, we only need to consider those points $x \in E$. Choose some affine patch $D_{+}(\bg) \subset \widetilde{X}$ containing $x$. By Lemma \ref{local_locus}, $V(\overline{f}) \cap D_{+}(\bg)$ is cut out by the ideal $(g,f/g)$. If $x \in V(\overline{f})$, then both $g$ and $f/g$ lie in the the maximal ideal of $\mco_{\widetilde{X},x}$, meaning that $f = g \cdot f/g$ is in the square. Otherwise, if $x \in E - V(\overline{f})$, then only $g$, the local equation for $E$ lies in the maximal ideal of $\mco_{\widetilde{X},x}$ while $f/g$ is a unit. Hence, $\mco_{E,x} = \mco_{\widetilde{X},x}/(g) = \mco_{\widetilde{X},x}/(f)$ and the result follows from the fact that $\mco_{E,x}$ and $\mco_{\widetilde{X},x}$ are both regular.
  \end{proof}

We now specialize to the case of an unramified regular local ring. We introduce the following slightly generalized notion from \cite{Skalit}:
\begin{defn}Let $(R_0,\pi)$ be a DVR with perfect residue field. We call a regular local ring $(A,\fn)$ $R_0$-unramified if $R_0 \subset A$ and $\pi \in \fn-\fn^2$.
\end{defn}
When $R_0 = \mathbb{Z}_{p\mathbb{Z}}$, the notion of $R_0$-unramified coincides with the traditional definition of unramified. A regular local ring of mixed-characteristic can be ramified at $p \in \mathbb{Z}$ yet $R_0$-unramified for an appropriate choice of $R_0$. Such rings arise naturally when we take the ring of integers $\mco_K$ of a number field $K/\mathbb{Q}$ and consider the local rings of a smooth $\mco_K$-scheme $Y$. If $A$ contains a field $k$, and $\pi \in \fn-\fn^2$, then by taking the prime field $k_0 \subset k$, we see that $A$ will be $R_0$-unramified over the subring $R_0 = k_0[\pi]_{(\pi)} \subset A$.

\begin{prop}\label{struct_blowup}Let $(R_0,\pi)$ be a DVR with perfect residue field and suppose that $(A,\fn)$ is an $R_0$-unramified regular local ring. Let $X = \Spec A$, $\widetilde{X}$ the blowup along $\fn$, and $V(\opi)$ the closed subscheme of the exceptional divisor $E$ defined via $\gr_{\fn} A \twoheadrightarrow \gr_{\fn} A/\opi \gr_{\fn} A$. Let $x \in \widetilde{X}$. 
\begin{itemize}
\item[(a)] If $x \notin V(\opi)$, then $\widehat{\mco}_{\widetilde{X},x}$ is a power series ring over a field or DVR.
\item[(b)] If $x \in V(\opi)$, then there exists a complete DVR $(S,\pi)$ for which $\widehat{\mco}_{\widetilde{X},x}$ is a power series over $S[[U,V]]/(UV-\pi)$.
\end{itemize}
\end{prop}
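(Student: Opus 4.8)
The plan is to compute $\mathcal{O}:=\widehat{\mco}_{\widetilde{X},x}$ by selecting a regular system of parameters dictated by the local geometry of the exceptional divisor $E$ and of $V(\opi)$, and then to recognize the answer via a Cohen-type structure argument. Since $\widetilde{X}$ is the blowup of the regular scheme $X$ along the regular center $\{\fn\}$, it is regular, so $\mathcal{O}$ is a complete regular local ring; write $r=\dim\mathcal{O}$ and let $K$ be its residue field. I would first set aside the case in which $\pi$ is a unit of $\mco_{\widetilde{X},x}$: then $R_0\to\mco_{\widetilde{X},x}$ factors through the fraction field of $R_0$, so $\mathcal{O}$ contains a field and Cohen's structure theorem gives $\mathcal{O}\cong K[[z_1,\dots,z_r]]$; as $x\notin E\supseteq V(\opi)$ in this case, (a) holds. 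Henceforth $\pi\in\fm_{\mathcal{O}}$, and then $R_0\to\mco_{\widetilde{X},x}\to\mathcal{O}$ is a local homomorphism.

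Next I would manufacture the base DVR. Applying Proposition \ref{extension} to $R_0$ and the extension $K/k_0$ of its residue field $k_0:=R_0/\pi$ (which is perfect by hypothesis, whence $K/k_0$ is separable) produces a local ring $S$, flat over $R_0$, with $\fm_S=\pi S$ and $S/\fm_S=K$; being flat over the DVR $R_0$ and having principal maximal ideal, $S$ is a DVR with uniformizer $\pi$, and I pass to its completion. Since $R_0\to S$ is flat with separable residue extension it is formally smooth, so, exactly as in the proof of Lemma \ref{gen_cohen}(a), one lifts $R_0\to S\to\mathcal{O}/\fm_{\mathcal{O}}^{j}$ step by step and passes to the limit, obtaining a local homomorphism $S\to\mathcal{O}$ over $R_0$ that sends $\pi$ to $\pi$ and induces the identity on $K$.

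For (a), where $x\notin V(\opi)$: Corollary \ref{ramification_locus} (applied to $f=\pi$) gives $\pi\notin\fm_{\mathcal{O}}^2$, so $\pi$ extends to a regular system of parameters $\pi=z_1,z_2,\dots,z_r$ of $\mathcal{O}$. The map $S[[Z_2,\dots,Z_r]]\to\mathcal{O}$ ($Z_i\mapsto z_i$, $S\to\mathcal{O}$ as above) is surjective --- $\mathcal{O}$ is complete, $\fm_{\mathcal{O}}=\fm_S\mathcal{O}+(z_2,\dots,z_r)$, and $S\to\mathcal{O}$ is onto on residue fields --- from a regular, hence integral, local ring of dimension $r=\dim\mathcal{O}$; for dimensional reasons its kernel is zero, so it is an isomorphism and $\mathcal{O}$ is a power series ring over the complete DVR $S$ (equivalently, over the field $K$ in the equicharacteristic case). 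For (b), where $x\in V(\opi)\subseteq E$: choose a chart $D_{+}(\bg)$ with $g$ among a regular system of parameters $t_1,\dots,t_d$ of $A$ generating $\fn$ and with $x\in D_{+}(\bg)$. By Lemma \ref{local_locus}, $V(\opi)\cap D_{+}(\bg)=V(g,\pi/g)$, so $g,\pi/g\in\fm_{\mathcal{O}}$ and $g\cdot(\pi/g)=\pi$. Now $g$ is a local equation for the regular divisor $E$, hence $g\notin\fm_{\mathcal{O}}^2$; modulo $g$ the ring $\mco_{E,x}$ is regular, and there $\pi/g$ is a local equation for $V(\opi)=\Proj(\gr_{\fn}A/\opi\gr_{\fn}A)$. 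Since $\opi$ is a nonzero degree-one element of the polynomial ring $\gr_{\fn}A$, this $V(\opi)$ is the hyperplane $\mathbb{P}^{d-2}_{A/\fn}$ inside $E\cong\mathbb{P}^{d-1}_{A/\fn}$ and so is regular; hence $\pi/g$ is a regular parameter of $\mco_{E,x}$, i.e. $\pi/g\notin\fm_{\mathcal{O}}^2+(g)$. Consequently $g$ and $\pi/g$ have linearly independent images in $\fm_{\mathcal{O}}/\fm_{\mathcal{O}}^2$ and extend to a regular system of parameters $z_1=g,\ z_2=\pi/g,\ z_3,\dots,z_r$ with $z_1z_2=\pi$. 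The map $S[[U,V,Z_3,\dots,Z_r]]\to\mathcal{O}$ given by $U\mapsto z_1$, $V\mapsto z_2$, $Z_i\mapsto z_i$ and $S\to\mathcal{O}$ annihilates $UV-\pi$, so it factors through $\bigl(S[[U,V]]/(UV-\pi)\bigr)[[Z_3,\dots,Z_r]]\cong S[[U,V,Z_3,\dots,Z_r]]/(UV-\pi)$. In $S[[U,V,Z_3,\dots,Z_r]]$ one has $UV-\pi\equiv-\pi$ modulo the square of the maximal ideal (as $\pi$ is a uniformizer of $S$), so $UV-\pi$ belongs to a regular system of parameters and the quotient is a regular --- in particular integral --- local ring of dimension $r$. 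The induced surjection onto $\mathcal{O}$ is therefore an isomorphism, and $\mathcal{O}\cong\bigl(S[[U,V]]/(UV-\pi)\bigr)[[Z_3,\dots,Z_r]]$ is a power series ring over $S[[U,V]]/(UV-\pi)$ with $(S,\pi)$ a complete DVR.

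I expect the main obstacle to be the verification in (b) that $g$ and $\pi/g$ really form part of a regular system of parameters of $\mathcal{O}$ --- concretely, that the locus $V(\opi)$ cut out by $\pi/g$ inside the regular exceptional scheme $E$ is itself regular. This rests on identifying $V(\opi)$ as a linear subspace of the exceptional projective space, which works because $\opi$ is a nonzero linear form in $\gr_{\fn}A$. Granting that, everything else is the familiar ``a surjection onto an integral domain of equal Krull dimension is an isomorphism'' packaging of Cohen's theorem, together with the elementary point that $UV-\pi$ remains a regular parameter over $S$.
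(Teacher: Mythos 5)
Your proof is correct and follows essentially the same strategy as the paper's: build a coefficient DVR $S$ via formal smoothness, use the local description of $V(\opi)$ from Lemma \ref{local_locus} to single out $g$ and $\pi/g$, and then close the argument with the Cohen-style ``surjection of complete local domains of equal dimension is an isomorphism.'' The one place you go beyond the paper in a useful way is in case (b): the paper invokes Lemma \ref{gen_cohen}(b) directly, but the literal hypotheses of that lemma require the base ring to be regular, whereas $S[[U,V]]/(UV-\pi)$ is not. You instead reprove the relevant part of Lemma \ref{gen_cohen}(b) by verifying that $g$ and $\pi/g$ genuinely extend to a regular system of parameters of $\mathcal{O}$ (using the regularity of $E$ and of $V(\opi)\cong\mathbb{P}^{d-2}$), and by checking explicitly that $UV-\pi$ is a regular parameter of $S[[U,V,Z_3,\dots,Z_r]]$ so that the quotient is a domain of the right dimension. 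This makes the argument more self-contained and repairs a small imprecision in the paper's citation; otherwise the two proofs are the same.
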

\begin{proof}(a) If $x \notin V(\opi)$, Corollary \ref{ramification_locus} affirms that $\pi$ lies outside of the square of the maximal ideal of $\mco_{\widetilde{X},x}$. If $\pi$ is a unit, then $\mco_{\widetilde{X},x}$ contains the fraction field of $R_0$. Otherwise, $\mco_{\widetilde{X},x}$ is $R_0$-unramified. In either case the result follows by appealing to Lemma \ref{gen_cohen}.

(b) By part (a) of Lemma \ref{gen_cohen}, there is a complete DVR $(S,\pi)$ whose residue field coincides with that of $\mco_{\widetilde{X},x}$ and is equipped with an $R_0$-algebra morphism $S \to \widehat{\mco}_{\widetilde{X},x}$. From Lemma \ref{local_locus}, the kernel of $\mco_{\widetilde{X},x} \twoheadrightarrow \mco_{V(\opi),x}$ is generated by $(g,\pi/g)$ where $g$ is the local equation for the exceptional divisor. We now define $S[[U,V]]/(UV - \pi) \to \widehat{\mco}_{\widetilde{X},x}$ by mapping $U \mapsto g$ and $V \mapsto \pi/g$. Since $\widehat{\mco}_{\widetilde{X},x}/(U,V)\widehat{\mco}_{\widetilde{X},x} \cong \widehat{\mco}_{V(\opi),x}$ is regular, we can now apply part (b) of Lemma \ref{gen_cohen} to deduce the result.
  \end{proof}

\begin{mcor}\label{pos_blowup}Let $(R_0,\pi)$ be a DVR with perfect residue field and let $(A,\fn)$ be an $R_0$-unramified regular local ring. Put $X = \Spec A$ and let $\widetilde{X}$ be the blowup along $\fn$. Then Serre's Positivity Conjecture holds at all stalks $\mco_{\widetilde{X},x}$.
\end{mcor}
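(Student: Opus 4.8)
\end{mcor}
\begin{proof}
The plan is to deduce the statement from Theorem \ref{main} together with Serre's classical positivity theorem, by exploiting the structural description of the stalks furnished by Proposition \ref{struct_blowup}. To begin, I would note that $\widetilde{X}$ is the blowup of the regular scheme $X = \Spec A$ along its closed point, which is a regular center, so $\widetilde{X}$ is regular; in particular each $\mco_{\widetilde{X},x}$ is a regular local ring and the assertion is meaningful. The completion homomorphism $\mco_{\widetilde{X},x} \to \widehat{\mco}_{\widetilde{X},x}$ is faithfully flat, local, and dimension-preserving, and its target is again regular. Hence, by Lemma \ref{triviality} applied with $r = 1$, it suffices to verify Serre's Positivity Conjecture for the complete ring $\widehat{\mco}_{\widetilde{X},x}$.

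I would then split into the two cases of Proposition \ref{struct_blowup}. If $x \notin V(\opi)$, then $\widehat{\mco}_{\widetilde{X},x}$ is a power series ring over a field or over a complete DVR, and positivity here is Serre's original theorem. If $x \in V(\opi)$, then $\widehat{\mco}_{\widetilde{X},x} \cong R[[X_1, \cdots, X_m]]$ for some $m \geq 0$, where $R = S[[U,V]]/(UV - \pi)$ for a complete DVR $(S,\pi)$; here the goal is to invoke Theorem \ref{main} directly.

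The one point requiring verification --- and the closest thing to an obstacle --- is that $R = S[[U,V]]/(UV-\pi)$ is a complete regular local ring of dimension exactly two, so that it fits the hypotheses of Theorem \ref{main}. For this I would observe that $T = S[[U,V]]$ is a three-dimensional complete regular local ring whose maximal ideal $\fm_T = (\pi, U, V)$ is generated by a regular system of parameters; since $U, V \in \fm_T$ we have $UV \in \fm_T^2$, whereas $\pi \notin \fm_T^2$, so that $UV - \pi \notin \fm_T^2$ and therefore $R = T/(UV - \pi)$ is regular of dimension two (and complete and local, being a quotient of $T$). With $R$ identified in this way, Theorem \ref{main} applies to $\widehat{\mco}_{\widetilde{X},x} \cong R[[X_1, \cdots, X_m]]$, which finishes the proof. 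Everything else is routine bookkeeping: one transfers the finite-colength and complementary-dimension hypotheses on the pair of modules across the flat, dimension-preserving completion --- exactly as in the proof of Lemma \ref{triviality} --- and observes that $\chi$ is thereby left unchanged.
\end{proof}
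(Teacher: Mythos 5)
Your proof is correct and follows the same route as the paper's: both pass through Proposition~\ref{struct_blowup} and then apply Theorem~\ref{main} (or Serre's classical positivity when the base has dimension $0$ or $1$). The additional verifications you supply --- that completion preserves the problem via Lemma~\ref{triviality}, and that $S[[U,V]]/(UV-\pi)$ is a two-dimensional complete regular local ring --- are left implicit in the paper but are exactly the right things to check.
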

\begin{proof}By Proposition \ref{struct_blowup}, $\widehat{\mco}_{\widetilde{X},x}$ is a power-series over a zero, one, or two-dimensional base. The result now follows immediately from Theorem \ref{main}.
  \end{proof}

\subsection{Transversality in the Unramified Case}
We now fix $(R_0,\pi)$ a DVR with perfect residue field and an $R_0$-unramified regular local ring $(A,\fn)$. We let $X = \Spec A$ suppose that $Y$ and $Z$ are two closed integral subschemes meeting (set-theoretically) at a point and whose dimensions add up to that of $X$. In this situation, we are guaranteed that $\chi^{\mco_X}(\mco_Y,\mco_Z) \geq e(\mco_Y)e(\mco_Z)$ (see \cite[Thm. A]{Skalit}). Conjecture \ref{transversal} asserts that equality holds if and only if the tangent cones of $Y$ and $Z$ meet at a point --- that is, $\dim(\gr \mco_Y \otimes_{\gr \mco_X} \gr \mco_Z) = 0$. Using Rees' Theorem to connect the notion of integral closure of ideals to Hilbert-Samuel multiplicity theory, we have been able to show that Conjecture \ref{transversal} has an affirmative answer in many cases:

\begin{thm}\cite[Thm. B,C,D,E]{Skalit}\label{old}With $X$, $Y$, and $Z$ as above, assume that $\chi^{\mco_X}(\mco_Y,\mco_Z) = e(\mco_Y)e(\mco_Z)$.
\begin{itemize}
\item[(a)] If $A$ contains a field, then $\dim(\gr \mco_Y \otimes_{\gr \mco_X} \gr \mco_Z)=0$.
\item[(b)] If $\pi$ annihilates either $\mco_Y$ or $\mco_Z$, then $\dim(\gr \mco_Y \otimes_{\gr \mco_X} \gr \mco_Y)=0$.
\item[(c)] If $\pi$ annihilates neither $\mco_Y$ nor $\mco_Z$, then either $e(\mco_Y/\pi \mco_Y) = e(\mco_Y)$ or $e(\mco_Z/\pi \mco_Z) = e(\mco_Z)$. Furthermore, $\dim(\gr \mco_Y \otimes_{\gr \mco_X} \mco_Z) =0$ if any one of the following conditions holds:
		\begin{itemize}
		\item[(i) ] $e(\mco_Y/\pi \mco_Y) = e(\mco_Y)$ and $\gr \mco_Y$ has no height-one embedded primes.
		\item[(ii)] $e(\mco_Y/\pi \mco_Y) = e(\mco_Y)$ and $e(\mco_Z/\pi \mco_Z) = e(\mco_Z)$
		\item[(iii)] Either $Y$ or $Z$ has codimension $1$.
		\item[(iv)] $\gr \mco_Y \otimes_{\gr \mco_X} \gr \mco_Z \otimes_{\gr \mco_X} \frac{\gr \mco_X}{\opi \gr \mco_X}$ is zero-dimensional.
		\end{itemize}
\end{itemize}
\end{thm}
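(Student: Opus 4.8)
The plan is to pass to the blowup $\phi:\widetilde{X}\to X$ of $X=\Spec A$ along its closed point $\fn$ and argue by localizing there. Write $E=\Proj(\gr_{\fn}A)$ for the exceptional divisor and let $\widetilde{Y},\widetilde{Z}$ be the strict transforms of $Y,Z$. The first step is to recast both hypotheses geometrically on $\widetilde{X}$. On the one hand, $\widetilde{Y}$ is the blowup of $Y$ along its closed point, hence integral of dimension $\dim Y$, with $\widetilde{Y}\cap E=\Proj\gr\mco_Y$ inside $E$ (and likewise for $Z$); since $\widetilde{X}\smallsetminus E\cong X\smallsetminus\{\fn\}$ and $Y\cap Z$ is supported at $\fn$, the schemes $\widetilde{Y}$ and $\widetilde{Z}$ can meet only along $E$, so set-theoretically $\widetilde{Y}\cap\widetilde{Z}=\Proj(\gr\mco_Y\otimes_{\gr\mco_X}\gr\mco_Z)$. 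Thus the hypothesis $\dim(\gr\mco_Y\otimes_{\gr\mco_X}\gr\mco_Z)\le 1$ says exactly that $\widetilde{Y}\cap\widetilde{Z}$ is a finite (possibly empty) set of closed points of $\widetilde{X}$ lying on $E$, and the conclusion $\dim(\gr\mco_Y\otimes_{\gr\mco_X}\gr\mco_Z)=0$ is equivalent to $\widetilde{Y}\cap\widetilde{Z}=\emptyset$. On the other hand, the key formula \cite[20.4.3]{Fulton} recalled in the introduction, $\chi^{\mco_X}(\mco_Y,\mco_Z)=e(\mco_Y)e(\mco_Z)+\chi^{\mco_{\widetilde{X}}}(\mco_{\widetilde{Y}},\mco_{\widetilde{Z}})$, converts the hypothesis $\chi^{\mco_X}(\mco_Y,\mco_Z)=e(\mco_Y)e(\mco_Z)$ into $\chi^{\mco_{\widetilde{X}}}(\mco_{\widetilde{Y}},\mco_{\widetilde{Z}})=0$. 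So it suffices to show $\widetilde{Y}\cap\widetilde{Z}=\emptyset$.

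Next I would localize the error term. Because $\widetilde{Y}\cap\widetilde{Z}$ is finite, each coherent sheaf $\sTor_j^{\mco_{\widetilde{X}}}(\mco_{\widetilde{Y}},\mco_{\widetilde{Z}})$ has zero-dimensional support, so it has no higher cohomology and its $H^0$ is the direct sum of its stalks; hence
\[ \chi^{\mco_{\widetilde{X}}}(\mco_{\widetilde{Y}},\mco_{\widetilde{Z}})=\sum_{x\in\widetilde{Y}\cap\widetilde{Z}}\chi^{\mco_{\widetilde{X},x}}(\mco_{\widetilde{Y},x},\mco_{\widetilde{Z},x}). \]
For each such $x$ I want to invoke positivity on $\mco_{\widetilde{X},x}$: by Corollary \ref{pos_blowup}, $\mco_{\widetilde{X},x}$ satisfies Serre's Positivity Conjecture, so it remains to check the numerical hypotheses of clause (d) at $x$. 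Finiteness of $\ell(\mco_{\widetilde{Y},x}\otimes_{\mco_{\widetilde{X},x}}\mco_{\widetilde{Z},x})$ is immediate since $x$ is isolated in $\widetilde{Y}\cap\widetilde{Z}$. For the dimension count, $x$ is a closed point on $E$, so $\dim\mco_{\widetilde{X},x}=\dim X$; and since the exceptional fibre $\widetilde{Y}\cap E=\Proj\gr\mco_Y$ is pure of dimension $\dim Y-1$ (here one uses that $\gr\mco_Y$ is equidimensional of dimension $\dim Y$, e.g. after passing to the completion, dimensions being insensitive to completion), adding back the Cartier divisor $E$ yields $\dim\mco_{\widetilde{Y},x}=\dim Y$, and likewise $\dim\mco_{\widetilde{Z},x}=\dim Z$. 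Hence $\dim\mco_{\widetilde{Y},x}+\dim\mco_{\widetilde{Z},x}=\dim Y+\dim Z=\dim X=\dim\mco_{\widetilde{X},x}$, so clause (d) applies and $\chi^{\mco_{\widetilde{X},x}}(\mco_{\widetilde{Y},x},\mco_{\widetilde{Z},x})>0$.

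Combining the two steps, $\chi^{\mco_{\widetilde{X}}}(\mco_{\widetilde{Y}},\mco_{\widetilde{Z}})$ is a finite sum, indexed by $\widetilde{Y}\cap\widetilde{Z}$, of strictly positive integers; since this sum is $0$, the index set must be empty, and therefore $\dim(\gr\mco_Y\otimes_{\gr\mco_X}\gr\mco_Z)=0$, as claimed.

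The step I expect to be the main obstacle is the bookkeeping in the second paragraph: one must be certain that at every point of $\widetilde{Y}\cap\widetilde{Z}$ the dimensions of the two strict transforms add up to $\dim X$, so that we are genuinely in the positivity regime (d) of Serre's conjecture and not the vanishing regime (c) --- if the dimensions added up to something smaller, vanishing would force $\chi^{\mco_{\widetilde{X},x}}=0$ and the whole argument would collapse. This is precisely where one needs the identification of the strict transform with the blowup of $Y$ (resp. $Z$) along its maximal ideal together with equidimensionality of the exceptional fibre, i.e. formal equidimensionality of the local domains $\mco_Y$ and $\mco_Z$.
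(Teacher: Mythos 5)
You have proved the wrong statement. The theorem you were handed, Theorem \ref{old}, is quoted verbatim from the author's earlier paper \cite{Skalit} (note the citation ``\cite[Thm.\ B,C,D,E]{Skalit}'' in the header): the present paper never proves it, and none of its clauses (a), (b), or (c)(i)--(iv) has ``$\dim(\gr\mco_Y\otimes_{\gr\mco_X}\gr\mco_Z)\le 1$'' as a hypothesis. The closest is (c)(iv), but that condition is about the tensor product further cut by $\opi$ --- geometrically, that $\widetilde{Y}\cap\widetilde{Z}$ misses the ramification hyperplane $V(\opi)$ --- which is a different and incomparable constraint. What your argument actually establishes is the paper's \emph{final} unnumbered Corollary (Corollary~D of the introduction), which is the new contribution of Section~5 and is proved by exactly the blowup-and-localize mechanism you describe. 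In [Skalit], Theorem \ref{old}(c) is proved by entirely different means (Rees' theorem on integral closures of ideals and reductions, as the paper itself remarks); the blowup argument cannot reach those cases, precisely because one has no a priori finiteness of $\widetilde{Y}\cap\widetilde{Z}$.

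As a proof of that final Corollary, your argument is correct and is essentially the paper's own: pass to the blowup, use the finiteness of $\widetilde{Y}\cap\widetilde{Z}$ to collapse $\chi^{\mco_{\widetilde{X}}}(\mco_{\widetilde{Y}},\mco_{\widetilde{Z}})$ to a sum of stalkwise intersection multiplicities, invoke Corollary~\ref{pos_blowup} (which rests on Theorem~A) to make each summand strictly positive, and conclude via Fulton's formula that the index set is empty. The one place you diverge is the dimension bookkeeping $\dim\mco_{\widetilde{Y},x}=\dim Y$ etc.: the paper cites Lemma 3.8 of \cite{Skalit} for this, whereas you supply an equidimensionality argument via $\gr\mco_Y$. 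Your heuristic is sound (it is the standard fact underlying that lemma), but as written it is slightly under-justified --- ``adding back the Cartier divisor'' silently uses that $\widetilde{Y}$ is catenary and that $x$ lies on a top-dimensional component; it would be cleaner either to cite the lemma as the paper does or to spell out that $\widetilde{Y}$ is an integral excellent scheme, hence biequidimensional.
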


We now recast this transversality statement in the language of blowing up. For a more detailed discussion, consult Section 3.2 of \cite{Skalit}. Let $\widetilde{X}$, $\widetilde{Y}$, and $\widetilde{Z}$ be the point-blowups (i.e. along $\fn$) of $X$, $Y$, and $Z$. We can realize $\widetilde{Y}$ and $\widetilde{Z}$ as subschemes of $\widetilde{X}$ via the strict transforms of $Y$ and $Z$ under $\phi:\widetilde{X} \to X$. As reduced schemes,
\[ (\widetilde{Y} \cap \widetilde{Z})_{\red} \subset \phi^{-1}((Y \cap Z)_{\red}) = \phi^{-1}(\left\{\fn\right\}) = E = \Proj(\gr \mco_X) = \mathbb{P}_k^N \]
where $N = \dim X - 1$ and $k = A/\fn$. Hence as sets, $\widetilde{Y} \cap \widetilde{Z}$ coincides with the intersection of the projectivized tangent cones $E \cap \widetilde{Y} = \Proj(\gr \mco_Y)$ and \newline$E \cap \widetilde{Z} = \Proj(\mco_Z)$. To say that the tangent cones of $Y$ and $Z$ intersect at a point amounts to saying that $\widetilde{Y} \cap \widetilde{Z} = \emptyset$. Condition (iv) in Theorem \ref{old} says that if $\chi^{\mco_X}(\mco_Y,\mco_Z) = e(\mco_Y)e(\mco_Z)$, then $\widetilde{Y} \cap \widetilde{Z}$ is guaranteed to be empty as soon as one can show that $\widetilde{Y} \cap \widetilde{Z}$ misses the hyperplane $V(\opi)$ of $E$ which defines the ramification locus of $\pi$ (cf. Lemma \ref{ramification_locus}). However, with Theorem \ref{main} in hand, we have the following improvement, which is similar in flavor to \cite[Main Theorem (iii)]{Dutta_Blowup} and \cite{dutta_special_case_ii}.

\begin{mcor}Let $(R_0,\pi)$ be DVR with perfect residue field and let $(A,\fn)$ be an $R_0$-unramified regular local ring. Let $X = \Spec A$ and let $Y$ and $Z$ be closed, integral subschemes such that $\ell(\mco_Y \otimes_{\mco_X} \mco_Z) < \infty$ and $\dim Y + \dim Z = \dim X$. Suppose that $\chi^{\mco_X}(\mco_Y,\mco_Z) = e(\mco_Y)e(\mco_Z)$. If $\dim(\gr \mco_Y \otimes_{\gr \mco_X} \gr \mco_Z) \leq 1$, then, in fact, $\dim(\gr \mco_Y \otimes_{\gr \mco_X} \gr \mco_Z) = 0$.
\end{mcor}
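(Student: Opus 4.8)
The plan is to move the problem onto the blowup $\phi\colon\widetilde X\to X$ of $X=\Spec A$ along $\fn$, where Corollary~\ref{pos_blowup} supplies positivity at every stalk, and to exploit the fact that the hypothesis $\dim(\gr\mco_Y\otimes_{\gr\mco_X}\gr\mco_Z)\le 1$ forces the strict transforms to meet in a finite set. Write $\widetilde Y,\widetilde Z\subset\widetilde X$ for the strict transforms; these are the point-blowups of $Y$ and $Z$, hence integral with $\dim\widetilde Y=\dim\mco_Y$ and $\dim\widetilde Z=\dim\mco_Z$, and, as a set, $\widetilde Y\cap\widetilde Z$ lies in the exceptional divisor $E$ and coincides with $\Proj(\gr\mco_Y\otimes_{\gr\mco_X}\gr\mco_Z)$. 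Our hypothesis therefore says exactly that $\widetilde Y\cap\widetilde Z$ is at most zero-dimensional, i.e.\ a finite (possibly empty) set of closed points. Since $\gr\mco_Y\otimes_{\gr\mco_X}\gr\mco_Z$ is a finitely generated graded $k$-algebra with $k=A/\fn$, its $\Proj$ is empty precisely when it is finite-dimensional over $k$, i.e.\ zero-dimensional; so it suffices to show $\widetilde Y\cap\widetilde Z=\emptyset$.

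Suppose then that $\widetilde Y\cap\widetilde Z=\{x_1,\dots,x_s\}$ with $s\ge 1$; I will derive a contradiction. The blowup formula \cite[20.4.3]{Fulton} reads $\chi^{\mco_X}(\mco_Y,\mco_Z)=e(\mco_Y)e(\mco_Z)+\chi^{\mco_{\widetilde X}}(\mco_{\widetilde Y},\mco_{\widetilde Z})$, so the equality hypothesis forces $\chi^{\mco_{\widetilde X}}(\mco_{\widetilde Y},\mco_{\widetilde Z})=0$. On the other hand, each coherent sheaf $\Tor_j^{\mco_{\widetilde X}}(\mco_{\widetilde Y},\mco_{\widetilde Z})$ is supported on the finite set $\widetilde Y\cap\widetilde Z$, hence has vanishing higher cohomology and global sections $\bigoplus_i\Tor_j^{\mco_{\widetilde X,x_i}}(\mco_{\widetilde Y,x_i},\mco_{\widetilde Z,x_i})$; summing the alternating lengths gives
\[
\chi^{\mco_{\widetilde X}}(\mco_{\widetilde Y},\mco_{\widetilde Z})=\sum_{i=1}^{s}\chi^{\mco_{\widetilde X,x_i}}(\mco_{\widetilde Y,x_i},\mco_{\widetilde Z,x_i}),
\]
each summand being a well-defined finite number, since $\mco_{\widetilde X,x_i}$ is regular by Proposition~\ref{struct_blowup} and $\mco_{\widetilde Y,x_i}\otimes_{\mco_{\widetilde X,x_i}}\mco_{\widetilde Z,x_i}$ has finite length.

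The crucial step --- and the one I expect to require the most care --- is to show that every summand on the right is \emph{strictly} positive, so that the sum cannot vanish. By Corollary~\ref{pos_blowup} it suffices to verify, at each $x_i$, the complementary-dimension relation $\dim\mco_{\widetilde Y,x_i}+\dim\mco_{\widetilde Z,x_i}=\dim\mco_{\widetilde X,x_i}$, rather than merely the decency bound $\le$ that Serre's theory gives for free. Here I would use that $A$ is regular, hence Cohen-Macaulay, hence universally catenary, and that $\mco_Y,\mco_Z$ are quotients of $A$, hence universally catenary domains; since the point-blowups $\widetilde Y\to\Spec\mco_Y$, $\widetilde Z\to\Spec\mco_Z$, $\widetilde X\to\Spec A$ are projective and birational and the points $x_i$ lie on the exceptional fibres with residue field finite over $k$, the dimension formula (see e.g.\ \cite{Matsumura}) gives $\dim\mco_{\widetilde X,x_i}=\dim A$, $\dim\mco_{\widetilde Y,x_i}=\dim\mco_Y$, and $\dim\mco_{\widetilde Z,x_i}=\dim\mco_Z$. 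The desired relation then follows from $\dim\mco_Y+\dim\mco_Z=\dim A$.

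With positivity of every summand established, $\chi^{\mco_{\widetilde X}}(\mco_{\widetilde Y},\mco_{\widetilde Z})=\sum_i\chi^{\mco_{\widetilde X,x_i}}(\mco_{\widetilde Y,x_i},\mco_{\widetilde Z,x_i})>0$, contradicting its vanishing. Hence $\widetilde Y\cap\widetilde Z=\emptyset$, and by the reduction in the first paragraph this forces $\dim(\gr\mco_Y\otimes_{\gr\mco_X}\gr\mco_Z)=0$.
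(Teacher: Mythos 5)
Your proposal is correct and follows essentially the same approach as the paper: pass to the point-blowup, observe that the hypothesis forces $\widetilde Y\cap\widetilde Z$ to be finite so that $\chi^{\mco_{\widetilde X}}(\mco_{\widetilde Y},\mco_{\widetilde Z})$ decomposes as a sum of local intersection multiplicities, invoke Corollary~\ref{pos_blowup} for strict positivity of each summand, and conclude via Fulton's formula. The only minor difference is that you re-derive the dimension equalities $\dim\mco_{\widetilde X,x_i}=\dim X$, etc., from the dimension formula and universal catenarity, whereas the paper simply cites Lemma~3.8 of \cite{Skalit}.
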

\begin{proof}Let $\widetilde{X}$, $\widetilde{Y}$ and $\widetilde{Z}$ be the blowups at the closed point $\fn$. From the above discussion, the condition that $\dim(\gr \mco_Y \otimes_{\gr \mco_X} \gr \mco_Z) \leq 1$ means that $\widetilde{Y} \cap \widetilde{Z}$ is a finite (possibly empty) set of closed points $x_1, \cdots x_p \in \widetilde{X}$. In this case, the sheaves $\Tor_j^{\mco_{\widetilde{X}}}(\mco_{\widetilde{Y}},\mco_{\widetilde{Z}})$ are direct sums of skyscraper sheaves and therefore have no higher Zariski sheaf cohomology. The formula for $\chi^{\mco_{\widetilde{X}}}$ therefore reduces to
\[ \chi^{\mco_{\widetilde{X}}}(\mco_{\widetilde{Y}},\mco_{\widetilde{Z}}) = \sum_{i,j \geq 0}{(-1)^{i+j}\ell(H^i(\widetilde{X}, \Tor_j^{\mco_{\widetilde{X}}}(\mco_{\widetilde{Y}},\mco_{\widetilde{Z}})))} = \sum_{i = 1}^p{\chi^{\mco_{\widetilde{X},x_i}}(\mco_{\widetilde{Y},x_i},\mco_{\widetilde{Z},x_i})} \]
Note that if $x \in \widetilde{X}$ is closed, then by Lemma 3.8 of \cite{Skalit}, there are equalities
\[ \dim(\mco_{\widetilde{X},x}) = \dim X, \hspace{5mm} \dim(\mco_{\widetilde{Y},x}) = \dim Y, \hspace{5mm}\mbox{and}\hspace{5mm} \dim(\mco_{\widetilde{Z},x}) = \dim Z. \]
By Corollary \ref{pos_blowup}, Serre's Positivity Conjecture is known for every local ring of $\widetilde{X}$, so each term $\chi^{\mco_{\widetilde{X},x_i}}(\mco_{\widetilde{Y},x_i},\mco_{\widetilde{Z},x_i})$ is positive. We therefore see that $\chi^{\mco_{\widetilde{X}}}(\mco_{\widetilde{Y}},\mco_{\widetilde{Z}}) = 0$ if and only if $\widetilde{Y} \cap \widetilde{Z} = \emptyset$. On the other hand, since we have assumed that $\chi^{\mco_X}(\mco_Y,\mco_Z) = e(\mco_Y)e(\mco_Z)$, Fulton's formula \cite[20.4.3]{Fulton}
\[ \chi^{\mco_{X}}(\mco_Y,\mco_Z) = e(\mco_Y)e(\mco_Z) + \chi^{\mco_{\widetilde{X}}}(\mco_{\widetilde{Y}},\mco_{\widetilde{Z}}) \]
forces $\chi^{\mco_{\widetilde{X}}}(\mco_{\widetilde{Y}},\mco_{\widetilde{Z}}) = 0$.
  \end{proof}

\section*{Acknowledgments}
The author wishes to thank M. V. Nori for many helpful discussions, particularly during the early phases of this project. Additional thanks are owed to the anonymous referee for thoroughly reading the manuscript, catching various misprints, and offering a number of helpful suggestions to improve the overall readability. This work was supported in part by an NSF Research and Training Grant (DMS-1246844).

\bibliographystyle{alpha}
\bibliography{unramified_paper}

\end{document}